\documentclass[11pt,a4paper]{article}

\UseRawInputEncoding
%% ADDITIONAL OPTIONAL STYLE FILES
%----------ADDED BY AUTHORS----------%
\usepackage{titling}
\usepackage{fullpage,authblk}
\usepackage{amssymb,amsfonts,amsmath,enumerate}
\usepackage{graphicx,bm,latexsym}
\usepackage{multirow}
\usepackage{algorithm}
\usepackage{algpseudocode}
\usepackage{mathrsfs}

\usepackage{tocloft}
\addtolength{\cftsecnumwidth}{1em}
\addtolength{\cftsubsecnumwidth}{1em}

\usepackage[bookmarks, colorlinks=true, plainpages = false, citecolor = green, urlcolor = black, filecolor = black]{hyperref}
\usepackage{lineno}
%\linenumbers

\newtheorem{theorem}{Theorem}
\newtheorem{proof}{Proof}
\newtheorem{corollary}{Corollary}
\newtheorem{remark}{Remark}
\newtheorem{proposition}{Proposition}
\newtheorem{definition}{Definition}

\begin{document}
\title{Geometric Considerations of a Good Dictionary \\ for Koopman Analysis of Dynamical Systems{\color{black}: \\Cardinality, ``Primary Eigenfunction," and Efficient Representation}} %\\ Efficient Representation Despite Infinite Cardinality of  \\``Primary Eigenfunctions"}}
         
\author{Erik M.~Bollt}
\affil{Department of Electrical and Computer Engineering and $C^3S^2$ the Clarkson Center for Complex Systems Science, Clarkson University, Potsdam, New York 13699}

\date{\today}

\maketitle

\begin{abstract}
     Representation of a dynamical system in terms of simplifying modes is a central premise of reduced order modelling and a primary concern of the increasingly popular DMD (dynamic mode decomposition) empirical interpretation of Koopman operator analysis of complex systems.  In the spirit of optimal approximation and reduced order modelling the goal of DMD methods and variants are to describe the dynamical evolution as a linear evolution in an appropriately transformed lower rank space, as best as possible.  
     {\color{black} 
     That Koopman eigenfunctions follow a linear PDE that is solvable by the method of characteristics yields several interesting relationships between geometric and algebraic properties.  
     Corresponding to freedom to arbitrarily define functions on a data surface, for each eigenvalue, there are infinitely many eigenfunctions emanating along characteristics.  We focus on contrasting cardinality and equivalence.  In particular, we introduce an equivalence class, ``primary eigenfunctions," consisting of those eigenfunctions with identical sets of level sets, that helps contrast algebraic multiplicity from other geometric aspects.
     Popularly, Koopman methods and notably dynamic mode decomposition (DMD) and variants, allow data-driven study of how measurable functions evolve along orbits.
As far as we know, there has not been an in depth study regarding the underlying geometry as related to an efficient representation.        We present a construction that leads to functions on the data surface whose corresponding eigenfunctions are efficient in a least squares sense.  We call this construction optimal Koopman eigenfunction DMD, (oKEEDMD), and we highlight with examples.}      
                      \end{abstract}

{{\bf Keywords}: \it {Koopman operator, spectral analysis, reduced order model, dynamical system, ROM, good dictionary, DMD, EDMD}} 

{ {\bf Mathematics Subject Classification}: \it{		37M25, 37C10, 34M45, 47E05, 42-04, 68T99, 	65L99 }}

\section{Introduction}

The data-oriented Koopman eigenfunction analysis perspective for analyzing dynamical systems has become extensively popular and relevant  lately in science and engineering \cite{budivsic2012applied, kutz2016dynamic, lan2013linearization}, as a way to interpret even a nonlinear dynamical system, as a much simpler linear system.  However in turn, a  possibly finite dimensional nonlinear system, may well require infinitely many dimensions to interpret as a linear system.  This is perhaps not a bad trade-off, infinite dimensions for linearity, and from there, computational schemes proceed to estimate the representation in terms of finite dimensional truncation of the infinite dimensional embedding.   In this paper we will introduce a new perspective on the nature of eigenfunctions, relating algebraic and geometric aspects.  The fact that there is a coincidence of these eigenfunctions and duplicity of eigenvalues, and even their form of their explicit solution, was  already observed in  \cite{korda2019optimal} as well as our prior work, \cite{bollt2018matching}; we go a step further in this work by 
describing a quotient over those functions with matching level sets, that we call primary eigenfunctions.   {\color{black} Further, we go on to describe a geometry amongst these primary eigenfunctions in terms of the manner in which their level sets intersect the level sets of other primary eigenfunctions.  Then,} this perspective leads us to  present a new data driven approach toward optimal empirical eigenfunctions by, as it turns out, construction on a lower dimensional data set, transverse to the flow.  As an aside, also we will discuss how the nature of the point and continuous spectra are domain dependent.   
%Nonetheless, the question as to where do all those extra dimensions come from, when starting with a finite dimensional phase space, has been a bit mysterious,  at least to this author. 
%To this end, this manuscript presents several precise explanations of the underlying dimensionality of evolution of observables.

{\color{black}
This paper offers the following points, along with supporting technical materials:
\begin{enumerate}
\item Koopman eigenfunctions are solutions of a linear PDE which is solvable by means of the method of characteristics that propagates an initial data function defined on a co-dimension one manifold that is transverse to the flow of the underlying system.   It follows that the cardinality of eigenfunctions associated with each eigenvalue is the same as the cardinality of data functions that can be stated on the data manifold.
\item While the standard algebraic property of eigenfunctions also describes infinitely many eigenfunctions, this does not account for all of the eigenfunctions.  %For each eigenvalue there are typically infinitely many eigenfunctions.  
We define a new concept, called ``primary eigenfunctions" as an equivalence class of those eigenfunctions which share the same set of level sets. We contrast a geometric interpretation versus  the standard algebraic concept.
\item In the title of this paper, the phrase geometric considerations refers to the fact that primary eigenfunctions are distinguished by the manner in which their  level sets intersect.  However, a good dictionary refers to a set of eigenfunctions for an efficient representation.
\item When representing  arbitrary observable functions by series of eigenfunctions, there is freedom to define an efficient representation, choosing the best amongst the rich and infinite number of eigenfunctions.  %**another version of good*** We define an efficient representation as one that minimizes the least squares error as a series of eigenfunctions.  
We develop a construction of a good dictionary of eigenfunctions, called  optimal Koopman eigenfunction extended DMD, (oKEEDMD), that efficiently represents a specific chosen observable function, and that explicitly exploits the method of characteristics construction.  %Efficient is defined by minimal  residual of partial sums of series of eigenfunctions, in a least squares error sense. 
\end{enumerate} 
}

Spectral properties of the Koopman operator, and especially the Koopman mode decomposition (called KMD, \cite{mezictraffic}) offer a data-driven methodology for global analysis of a dynamical systems, including forecasting as well as descriptive decompositon into growing, fluctuation and decaying components, \cite{mezic2004comparison,mezic2005spectral,mezic2019spectrum}.  It is the many data-driven aspects of this concept that have opened a floodgate of research into this spectral analysis philosophy for dynamical systems.
The initial DMD and exact DMD methods working directly from the data are still useful and popular numerical methods,
\cite{schmid-2010,rowley2009spectral, kutz2016dynamic} and variants such as sparse DMD \cite{jovanovic2012low}, extended dynamic mode decomposition (EDMD), that interpet data in terms of basis functions, \cite{williams-2015,williams-2015b,kevrekidis2016kernel}, that may be called a dictionary of features using the language of machine learning.  This includes extended dynamic mode decomposition with (optimal) dictionary learning EDMD-DL, \cite{li2017extended}).  Many other specialized and extremely useful variants exist including such as DMD-c (with control)  \cite{kutz2016dynamic, kaiser2020data} emphasize developing control laws.  Basis learned from data through eigendecomposition of a kernel integral operator that optimize on a Dirichlet energey funciton are described in \cite{das2019delay, giannakis2019data}, and in eigendecompositions in reproducing kernel Hilbert spaces and as related to regularity are found in \cite{ klus2020eigendecompositions} also related to \cite{kevrekidis2016kernel}, amongst the growing literature in data-driven spectral analysis of transfer operators.  Our prior work in \cite{bollt2018matching} even developed a matching extended dynamic mode decomposition  (EDMD-M)  describing diffeomorphisms between dynamical systems directly in such a spectral framework.   Into this existing body of literature, we cast the results in this paper as oKEEDMD for an optimal Koopman eigenfunction interpretation of the extended DMD concept from evolution operators.  It describes that amongst the many eigenfunctions available as we note  may design an optimal representation functional so that both representation of evolution of observations is efficient, but it is done within a spectral framework, as KMD   is emphasized.

%We explore the cardinality, and some geometric properties of eigenfunctions.   Multiplicity due to the well known algebraic property of eigenfunctions, versus, the fact that eigenfunctions are defined by a linear PDE are considered.  We review how the PDE is readily solved by the classic method of characteristics propagating data from a co-dimension-one transverse surface.   
%item Noting that there are typically infinitely many eigenfunctions for each eigenvalue, we define an equivalence class called ``primary eigenfunctions"  as quotient of those eigenfunctions with matching level sets.  This eliminates the coincidence due to the algebraic property of eigenfunctions.  Nonetheless, there are still typically infinitely many primary eigenfunctions, which leads us to consider if there is an efficient representation.  We present a construction that leads to functions on the data surface whose corresponding eigenfunctions are efficient in a least squares sense.  We call this construction optimal Koopman eigenfunction DMD, (oKEEDMD), and we highlight with examples.
%\end{itemize}

First, we review the underlying theory.  Consider,  a differential equation in $M\subset \mathbb{R}^d$,
\begin{equation}\label{ode}
\dot{x}=F(x),
\end{equation}
 with a vector field,
$F:M \rightarrow R.$  As usual, the nonautonomous scenario, $f(x,t)$ with $x\in {\mathbb R}^d$ can be written in $d+1$ dimensions as an autonomous problem, by augmenting with a time variable. 
%  Depending on $F$,  we assume an invariant flow\cite{perko2013differential}.
%\hlql{Notation: $\varrho_t: M\times {\mathbb R}  \rightarrow M$ is a little confusing, since the subscript $t$ is already written out. Is this standard notation? If not, can we just say $\varrho_t, t\geq 0$ is a family (semi-group?) of maps $\{\varrho_t : M \rightarrow M\}$? }
%\begin{eqnarray}
%\varrho_t:M\times {\mathbb R} & \rightarrow& M, \nonumber \\
%x(0)=x_0 &\mapsto&x(t)=\varrho_t(x_0)
%\end{eqnarray}
%$\varrho_t:M\to M$ for each $t\in\mathbb{R}$ (or semi-flow for $t\geq 0$), where $M\subseteq {\mathbb R}^d$ states an invariant set. 
The flow is stated  for each $t\in\mathbb{R}$ (or semi-flow for $t\geq 0$)  as a function, $x(t):=\varrho_t(x_0)$ for a trajectory starting at $x(0)=x_0\in M$. 

The associated Koopman operator,   often called a composition operator,  describes the evolution of ``observables", or ``measurements" along the flow, \cite{budivsic2012applied, mezic2013analysis}. Rather than analyzing individual trajectories in  phase space, {\color{black} we analyze  the dynamics of observations  functions.}
These ``observation functions",  
\begin{equation}
g:M\rightarrow {\mathbb C},
\end{equation}
 are elements of a space of observation functions ${\cal F}$.  Specifically, throughout this work, we will choose,
 \begin{equation}
 {\cal F}=L^2(M)=\{g:\int_M |g(s)|^2 ds<\infty \},
 \end{equation}
{\color{black} which} is commonly used since it is particularly convenient for numerical applications that utilize the inner product associated with the Hilbert space structure \cite{budivsic2012applied,kutz2016dynamic, mezic2005spectral, lan2013linearization, mezic2005spectral, mezic2019spectrum}.  There are many other useful choices for the function space, including other measures as a general $L^2(M,\mu)$ that allow notably for choices such as $\mu$ to be an invariant measure, or the SRB measure if one exists, and carefully associating the relevant domain.  The details of such choices effect the details of the spectral decomposition, but we will restrict this work to the Lebesgue measure, and also note that interpreting the composition operator in finite time makes for this simple choice to be especially useful when we turn to computational issues later in this paper.
 Also, for now we discuss scalar observation functions, but multiple scalar observation functions can be considered together, ``stacked" as a composite vector valued observation function.  
 {\color{black} Our eigenfunction discussion is stated in terms of smooth  (strong) solutions of a quasi-linear PDE, particularly as stated in Theorem \ref{thekoopmanpdetheorem}. {\color{black} Nonetheless, even if eigenfunctions so derived are  $C^1(U,[0,T])$, the discussion leading to infinitely many eigenfunctions for each eigenvalue holds in this subset and does not rely on the structure of $L^2$.}  Distinguishing these many eigenfunctions inspires our definition of primary eigenfunctions.  It is interesting to contrast this work to the discussion of principal eigenfunctions by Mohr-Mezic \cite{mohr2016koopman} considers the algebraic property of eigenfunctions to properly capture the so-called isostable coordinates, up to diffeomorphism; considering the Hartman-Grobman  linearization near an equilibrium,  eigenvalues and eigenfunctions are correspondingly matched to those of the full nonlinear Koopman operator, so accounting the  principal algebra.  Recently in Kvalheim-Revzen \cite{kvalheim2019existence}, showed a  $C^1$ existence and uniqueness theorem for principal eigenfunctions, and the resulting diffeomorphism to  isostable coordinates.  
 In the present work, we are able to describe cardinality of eigenfunctions beyond the principal algebraic structure, motivating our introduction of primary eigenfunctions, and we note that principal eigenfunctions can generate examples of our primary eigenfunctions, but generally there are more non-principle but primary eigenfunctions, as annunciated by Corollary \ref{uncount}.

 The dynamics of how observation functions change over time along orbits is what the Koopman operator defines.}
\begin{definition}\label{koopmandefnD}
{\bf Koopman Operator}. (Composition Operator), \cite{koopman1931hamiltonian, mezic2013analysis, cvitanovic2005chaos,gaspard2005chaos}. 
The operator, ${\mathscr K}_t$, associated with $\varrho_t$, is
%The operator, ${\mathcal K}_{\varrho_t}$, associated with $\varrho_t$, is
 a (semi-) flow, stated as the following composition,
\begin{eqnarray}\label{koopmandefn}
{\mathscr K}_t:{\cal F}&\to& {\cal F}, \nonumber \\
g &\mapsto & {\mathscr K}_t[g](x)=g\circ \varrho_t,
\end{eqnarray}
 on the function space ${\cal F}$, for each $t\in\mathbb{R}$ (or as a semi-flow if the relation only holds for $t\geq 0$).
That is, for each $x$, we observe the value of an observable $g$ not at $x$, but ``downstream" by time $t$, at $\varrho_t(x)$.  See Fig.~\ref{Fig1}.  
%This writing of the Koopman operator is equally well described by its other common name as the Composition operator.
\end{definition}

%$x_1$, $x_2$, ${\mathbf x}_0$ $\varrho_t({\mathbf x}_0)$ $\phi_\lambda$   $\phi_\lambda \circ \varrho_t({\mathbf x}_0)$  $M$ $\phi_\lambda({\mathbf x}_0)$

\begin{figure}[htbp]
\centering
\includegraphics[scale=1]{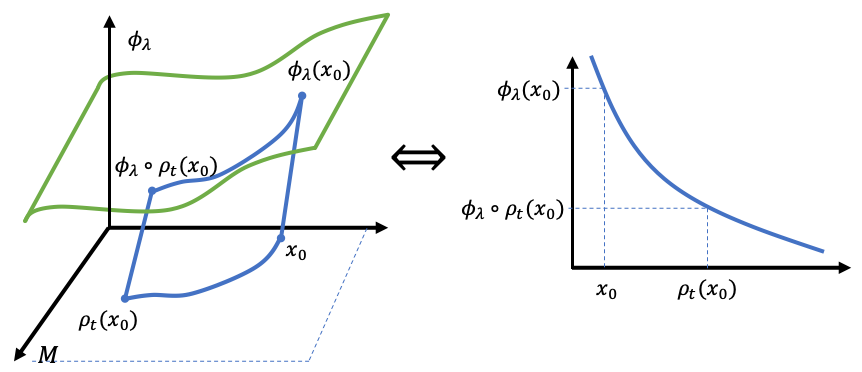}
 \caption{{\color{black}The action of a Koopman operator (composition operator) is to extract the value of a measurement function downstream, (Left) Eq.~(\ref{koopmandefn}), but (Right) for eigenvalue and eigenfunction pair (KEIGS) $(\lambda, \phi_\lambda(x))$ that function has the property Eq.~(\ref{eq:koopman definition}) effectively interpreting the change of $\phi_\lambda$ {\it along} an orbit, as if the dynamics is linear, even if the flow may be nonlinear in its phase space $M$. }}
\label{Fig1}
\end{figure}
An  interesting and important feature of the Koopman operator is that it is linear on $\mathcal{F}$, but at the cost of possibly being infinite dimensional, even though it may be associated with a flow $\varrho_t$ that evolves on a finite dimensional space, and indeed even due to a nonlinear vector field.
The spectral theory of Koopman operators \cite{gaspard2005chaos, budivsic2012applied, mezic2013analysis} concerns eigenfunctions and eigenvalues of the operator ${\mathscr K}_t$, which may be stated  in terms that the pair must satisfy the equation,
\begin{equation}\label{eq:koopman definition}
{\mathscr K}_t[\phi_\lambda](x)=b^t \phi_\lambda(x)=e^{\lambda t} \phi_\lambda(x).
\end{equation}
See Fig.~\ref{Fig1}.
We will write an eigenvalue---eigenfunction pair of the Koopman operator as $(\lambda,\phi_\lambda(x))$, and call this pairing a ``KEIG", and from here forward we will emphasize the association between such an eigenfunction and its eigenvalue, by the subscript, $\phi_\lambda(x)$. For convenience, we will say ``KEIGs" when referring to a Koopman eigenvalue and eigenfunction pair, and write, $(\lambda,\phi_\lambda(x))$. %We shall assume that $\phi_\lambda\in{\cal F}$.  
The multiplying factor follows the eigenvalue, $b=e^\lambda$.  {\color{black} A major point that we make herein is that for each $\lambda$, not only is the eigenfunction $\phi_\lambda$ not unique, there are  {\it at least uncountably infinitely many} eigenfunctions (but generally not linearly independent) associated with each $\lambda$, and we state this even while allowing only unit normalized eigenfunctions, to remove  the trivial idea that constant multiples of eigenfunctions are eigenfunctions.  }

A trending concept in the empirical study of dynamical systems has come to be the spectral decomposition of observables into eigenfunctions of the Koopman operator, 
Koopman mode decomposition (KMD, \cite{mezictraffic}).
  Let a vector valued set of observables ${\bf g}(x)=[g_1(x),...,g_D(x)]:M\rightarrow {\mathbb C}^D \in {\cal F}^D, $ be written as a linear combination of eigenfunctions,
\begin{equation}
    {\bf g}(x)=\sum_{j=1}^\infty \phi_{\lambda_i}(x) {\bf v}_j
\end{equation}
where the vectors ${\mathbf v_j}\in {\mathbb C}^D$ are called Koopman modes.  Further, the power of this concept lies in the following expression that describes the dynamics of observations in terms that remind us of the linear Fourier analysis, but now of the Koopman modes.  {\color{black}That is if, ${ y}(t):={ g}\circ { x}(t)\equiv { g}\circ \varrho_t({ x}_0):=K_t[{ g}](t) $, then, 
\begin{equation}
    { g}(t)=\sum_{j=1}^\infty  e^{\lambda_j t}\phi_{\lambda_j}({ x}_0){\mathbf v}_j.
\end{equation}
}

{\color{black} What is not generally discussed is the uniqueness of this decomposition, and furthermore the nature and even the cardinality of these eigenfunctions. Given nonuniqueness of the representation,  we are particularly interested here in {\it an efficient} representation.   Thus we see the nonuniqueness not as a problem, but rather as an opportunity to leverage toward better efficiency.    We will interpret this efficiency phrase  as is commonly used in the data driven concept leading to the POD-KL (Karhunen-Loeve) decomposition. A basis set should be designed for fast convergence, as compared against all other allowable basis sets, \cite{karhunen1947under, loeve1977elementary, webber1997karhunen, holmes2012turbulence, watanabe1969knowing}. } 
\begin{definition}\label{efficientr}
A finite  set of {\color{black} unit KEIGS $\{\varphi_{\lambda_i}(x)\}_{i=1}^k$, {\color{black} $\varphi_{\lambda_i}:U\rightarrow {\mathbb C}$,} $\|\varphi_{\lambda_i}\|_{L^2(U)}=1$ for an open domain $U\in M$ {\color{black} that is designed } empirically for a given target observation function, $q:U\rightarrow {\mathbb C}$ is {\bf a k-efficient }}  if it yields minimal error for a given finite partial sum:
\begin{equation}\label{effbasis}
    \min_{{\mathbf a}} \|\sum_{i=1}^k a_i \varphi_{\lambda_i}-q\|_{L^2(U)}\leq \min_{{\mathbf b}} \|\sum_{i=1}^k b_i \phi_{\tilde{\lambda}_i}-q\|_{L^2(U)}
\end{equation}
as contrasted against any other set of $k$-unit eigenfunctions, $\{\phi_{\tilde{\lambda}_i}(x)\}_{i=1}^k$.  This allows that even the eigenvalues may not necessarily match.
\end{definition}
We address these issues, in the setting of the underlying dynamics of a low-dimensional differential equation.   {\color{black} While definition Eq.~(\ref{effbasis}) could well be defined} without requiring the basis functions to be KEIGS, our point is by so restricting, we pursue an efficient spectral analysis within the KMD framework.  {\color{black} An eigenfunction representation offers an interpretation of a process in terms of spatial dynamically stationary modes.  Our exploitation of this efficiency concept  leads us to develop an optimal Koopman eigenfunction extended dynamic mode decomposition (oKEEDMD) in Sec.~\ref{numericalmethods}.}

The paper by Korda, Milan and Mezic, \cite{korda2019optimal} has a strong overlap with this work regarding Sec.~\ref{eigenfunctionPDEsec} in discussion of the PDE for Koopman eigenfunctions, and this part also overlaps our own previous work, \cite{bollt2018matching}, both of which  described eigenfunction solutions by the  method of characteristics.  {\color{black} We review} the form of the solution as a precursor to our interest here.  In  Sec.~\ref{Primarysec}, we define ``primary" eigenfunctions as a quotient identifying sets of eigenfunctions such that the level sets match, and this naturally relates to the algebraic property that generates eigenfunctions.  Also, this perspective leads to what we call a geometric observation that primary eigenfunctions compare in terms of transversality of how the level sets intersect, which we summarize in Theorem \ref{transversetheorem}.  In  \cite{korda2019optimal}, the authors pursue an existence of representation theorem for KMD expansions.  However, with our goal here toward efficient representation, and also understanding transversality between the levels sets of primary eigenfunctions, we proceed in Sec.~\ref{numericalmethods} to develop a numerical method to generate eigenfunctions that efficiently develop a KMD.  To do this, we bring together a numerical estimation associated with the eigenfunction PDE, {\color{black} exploiting the method of characteristics solution.}  This allows us to formulate a simple to solve optimization problem, posed initially on the data surface, and  then propagated simply along characteristics.  This leads to solutions that are eigenfunctions, which  successively lead to what we call a good dictionary of eigenfunctions. Thus we  define as our oKEEDMD algorithm.

%And we remark implications to high dimensional spatiotemporal processes.

%\section{Main Questions}

%Now we state the  main questions of this paper:
%\begin{enumerate}
%\item Uniqueness and Cardinality question. How many eigenvalues and how many eigenfunctions do we expect for the flow in $M$?  How many are associated with a given eigenvalue.
%    \item Uniqueness and Cardinality question, restated.  How many distinct eigenvalue and eigenfunction pairs do we expect for the flow in $M$, and likewise for a given eigenvalue?  By distinct, we assert an equivalence class that we state as a phrase ``primary eigenfunctions."
 %   \item Given nonunqiquenss issues, what can be said about an efficient representation in terms of linear combination of eigenfunctions?
 %\end{enumerate}    
 
\section{The Eigenfunction PDE, Spectrum, and Solutions}\label{eigenfunctionPDEsec}

{\color{black} The KEIGS follow the infinitesimal generator that gives a simple quasi-linear PDE that we discuss in this section.}
Corresponding to the statement of ${\mathscr K}_t$ as a semi-group of compositions, follows the action of the infinitesimal generator, \cite{klus2018data,mezic2019spectrum,lasota2013chaos,bollt2013applied},
\begin{equation}\label{ininitesimal}
     {\cal L}=F\cdot \nabla,
   \end{equation}
 and so we recall:
\begin{theorem}\label{thekoopmanpdetheorem}\cite{mezic2019spectrum,bollt2018matching}
  A smooth exact eigenfunction of the Koopman operator of a given flow, for a given eigenvalue $\lambda\in {\mathbb C}$ must satisfy the following PDE, 
 \begin{equation}  
\label{kooppde}
  F(x) \cdot \nabla \phi_\lambda(x) = \lambda \phi_\lambda(x),
 \end{equation}
 if {\color{black} $U\in M$ is compact, and $\phi_\lambda:U\rightarrow {\mathbb C}$, is in $C^1(U)$, or alternatively, if $\phi_\lambda\in C^2(U)$}.
\end{theorem}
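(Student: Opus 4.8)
The plan is to derive the PDE~\eqref{kooppde} by differentiating the Koopman eigenvalue relation~\eqref{eq:koopman definition} in the flow-time $t$ and evaluating at $t=0$, so that the infinitesimal generator~\eqref{ininitesimal} appears on the left-hand side and the factor $\lambda$ on the right. Unwinding the definition~\eqref{koopmandefn}, the eigenvalue condition is the pointwise identity $\phi_\lambda(\varrho_t(x)) = e^{\lambda t}\phi_\lambda(x)$, holding for each $x\in U$ and all $t$ in a neighborhood of $0$ for which $\varrho_t(x)\in U$. I would treat both sides as functions of $t$ and differentiate at $t=0$.

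The right-hand side is immediate: $\tfrac{d}{dt}\big(e^{\lambda t}\phi_\lambda(x)\big)\big|_{t=0}=\lambda\phi_\lambda(x)$. For the left-hand side I would apply the chain rule, which is legitimate precisely because $\phi_\lambda\in C^1(U)$ and because $t\mapsto\varrho_t(x)$ is differentiable as the solution of~\eqref{ode}; since $\dot\varrho_t(x)=F(\varrho_t(x))$ with $\varrho_0(x)=x$, evaluation at $t=0$ gives $\dot\varrho_0(x)=F(x)$. Thus $\tfrac{d}{dt}\phi_\lambda(\varrho_t(x))\big|_{t=0}=\nabla\phi_\lambda(x)\cdot F(x)$. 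Equating the two derivatives yields $F(x)\cdot\nabla\phi_\lambda(x)=\lambda\phi_\lambda(x)$, which is exactly~\eqref{kooppde}.

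The two regularity hypotheses serve to make this differentiation rigorous and, in the $L^2$ setting, to identify $F\cdot\nabla\phi_\lambda$ with the generator $\mathcal{L}\phi_\lambda$ as a strong limit. Compactness of $U$ guarantees that each orbit remains in $U$ for $t$ near $0$ and that $F$ and $\nabla\phi_\lambda$ are uniformly bounded, so the pointwise derivative above is controlled uniformly in $x$. Under the alternative hypothesis $\phi_\lambda\in C^2(U)$, I would instead use Taylor's theorem to write $\phi_\lambda(\varrho_t(x))=\phi_\lambda(x)+t\,F(x)\cdot\nabla\phi_\lambda(x)+R(x,t)$ with a remainder satisfying $R(x,t)=O(t^2)$ uniformly on the compact set; dividing by $t$ then shows that $(\mathscr{K}_t\phi_\lambda-\phi_\lambda)/t$ converges to $F\cdot\nabla\phi_\lambda$ strongly in $L^2(U)$, so that $\phi_\lambda$ lies in the generator domain and again satisfies~\eqref{kooppde}.

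I expect the main obstacle to be not the formal differentiation but the justification of the limiting step in the appropriate topology: one must keep the flow $\varrho_t$ inside the domain on which $\phi_\lambda$ is controlled (hence the compactness assumption together with the restriction to small $t$, or a one-sided derivative in the semi-flow case), and one must confirm convergence of the difference quotient --- pointwise under $C^1$, or strongly in $L^2$ under the uniform Taylor estimate available with $C^2$. Once these are in hand the identity follows directly from the chain rule and the defining ODE~\eqref{ode} for the flow.
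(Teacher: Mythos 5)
Your proof is correct, and it is essentially the argument the paper itself gestures at: immediately after the theorem statement the author displays the chain-rule computation $\frac{d}{dt}\phi(x(t))=\nabla\phi\cdot F(x)$ and remarks that ``the proof refers to the infinitesimal generator,'' which is exactly your differentiation of $\phi_\lambda(\varrho_t(x))=e^{\lambda t}\phi_\lambda(x)$ at $t=0$. Where you diverge is from what the paper formally labels as the proof of Theorem~\ref{thekoopmanpdetheorem} in Appendix~\ref{characteristics}: that argument runs in the opposite direction. It takes the PDE, Eq.~(\ref{kooppde}), as given, writes down its characteristic system $dx/dr=F(x)$, $dz/dr=\lambda z$, and integrates along characteristics to obtain the representation $\phi_\lambda(x)=h\circ s^*(x)\,e^{\lambda r^*(x)}$ --- i.e., it really establishes the solution formula of Theorem~\ref{gensolkoop} rather than the necessity claim that an eigenfunction must satisfy the PDE. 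Your route is the more faithful proof of the implication actually asserted (eigenfunction $\Rightarrow$ PDE), and your reading of the hypotheses --- compactness plus $C^1$ to control the pointwise difference quotient uniformly, or $C^2$ to get a uniform Taylor remainder and hence strong convergence of $(\mathscr{K}_t\phi_\lambda-\phi_\lambda)/t$ into the generator domain --- is a sensible account of why the two alternatives are listed. What the paper's characteristics argument buys instead is the explicit form of all solutions, which feeds the cardinality and primary-eigenfunction discussion later on. No step of your argument fails; the only caution is the one you already flag, namely that for a semi-flow, or for $x$ whose forward orbit immediately exits $U$, the derivative at $t=0$ must be taken one-sided.
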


The idea behind this theorem recalls the chain rule, $\dot{g}(x)$ for an orbit $x(t)=\varrho_t(x)$, so, (using the component notation, $x_i=[x]_i$ for the $i^{\mbox{th}}$ component of $x$),
\begin{equation}
\frac{d}{dt} {\phi(x(t) )}=\sum_i \frac{\partial \phi}{\partial x_i} \frac{d x_i}{d t}=\sum_i \frac{\partial \phi}{\partial x_i} F_i(x)= \nabla \phi \cdot F(x).
\end{equation}
However, the proof refers to the infinitesimal generator.
{\color{black} This PDE  is classified as a quasi-linear,} and the solution thereof is well handled by the method of characteristics as discussed in many classical textbooks on PDEs, \cite{john1975partial}, {\color{black}which we review in this context in Appendix \ref{characteristics}.  }  {\color{black} That the initial data and the domain $U$ play an important role when stating eigenfunctions derived by this PDE is emphasized with a simple example in Appendix \ref{fixapp}.  These eigenfunctions of the infinitesimal generator relate to the term {\it open eigenfunction} (introduced in \cite{mezic2019spectrum} and we used in \cite{bollt2018matching}), for a finite nonzero time step, $t>0$ of the flow $\varrho_t$.   This demands of the composition operator statement of an eigenfuncton, Eq.~(\ref{eq:koopman definition}), a subdomain in which both $x$ and $\varrho_t(x)\in U$.}

In characterizing the set of eigenfunctions, it is useful to recall some of the standard spectral theory of linear operators.

\begin{remark} From spectral theory we review, \cite{yosida1994functional, boccara1990functional,kadison1983fundamentals}, the resolvent set of a linear operator ${\cal L}:{\cal F} \rightarrow {\cal F}$ consists of those functions in a Hilbert space ${\cal F}$ for which the resolvent exists.  That is, when the  operator,
\begin{equation}
    L_\lambda({\cal L})=({\cal L}-\lambda I), \mbox{ as a function of }\lambda\in {\mathbb C},
\end{equation}
has a bounded inverse, and when that operator exists call it, ${\cal R}_\lambda({\cal L})$.  The Koopman PDE, Eq.~(\ref{kooppde}), may be stated in terms of the resolvent operator, as 
\begin{equation}
L_\lambda({\cal L})\phi_\lambda(x)=(F\cdot \nabla -\lambda I)\phi_\lambda(x)=0.
\end{equation}

\begin{definition}\label{defn2}
The set of all complex numbers $\lambda$ such that the resolvent operator ${\cal R}_\lambda({\cal L})$ 1) exists, 2) is bounded, and 3) is densely defined is called the resolvent set, $\rho({\cal L})\subset {\mathbb C}$.  The complement is called the spectrum of ${\cal L}$, denoted $\sigma({\cal L}).$   
By complimentary definition, the type of failure mode according to each of each of these properties classifies the type of spectrum: 
\begin{enumerate}
    \item If
${\cal R}_\lambda({\cal L})$ fails to exist, as there is no inverse for those $\lambda$, then declare the {\it point spectrum}, $\lambda\in P_\sigma({\cal L})\subset {\mathbb C}$. 
\item If ${\cal R}_\lambda({\cal L})$ exists and it is densely defined, but is not bounded, then declare the corresponding $\lambda\in C_\sigma({\cal L})$, the {\it continuous spectrum}.  
\item If ${\cal R}_\lambda({\cal L})$ fails to be densely defined  for a given $\lambda$, then declare the {\it residual spectrum}, $\lambda\in R_\sigma({\cal L})$.  
\end{enumerate}
\end{definition}
Clearly, $\sigma({\cal L})=P_\sigma({\cal L}) \cup C_\sigma({\cal L})\cup R_\sigma({\cal L})$. 
{\color{black} Now  to the point of this remark: } the names of these spectral sets are not to be misinterpreted as indicating cardinality of the sets,  \cite{boccara1990functional}.   Appendix \ref{ctsspect} describes the role of the domain of the dynamical system and the consequences to the spectral decomposition.

%we nonetheless.**  More discussion on these aspects of classifying the spectrum, as specialized to the Koopman PDE, Eq.~(\ref{kooppde}) will be summarized in the appendix.
\end{remark}

{\color{black} Solutions of the Koopman PDE follow as corollary to Theorem \ref{thekoopmanpdetheorem}, the details of which are proved in the Appendix \ref{characteristics}, by application of the method of characteristics, \cite{john1975partial} as in context in \cite{bollt2018matching}.}  Extending to larger domains  demands nonrecurrence was noted in \cite{korda2019optimal} to prevent the complication of defining multi-valuedness to eliminate consideration of loops such as periodic orbits as well as orbit segments that exit and re-enter the domain of interest. % {\color{black} A nonrecurrent manifold must be transverse to the flow.}
%We define that a set $P$ is nonrecurrent in a time epoch time interval $[t_1,t_2]$ that contains zero, if for every $z\in P$, if $t_1\leq 0  \leq t_2$, then $orbit(z,t_1,t_2)\cap P$ is a connected subset (curve) in $P$, (let $orbit(z,t_1,t_2)=\cup_{t\in [t_1,t_2]} \varrho_t(z)$).
%once $\varrho_t(z)$ has left $P$, $t_1\leq t\leq t_2$, it never re-enters $P$ during the time epoch interval, whether running time forward or backward, depending on the signs of the chosen endpoints.  

\begin{theorem}\label{gensolkoop}
Let the Koopman eigenfunction PDE, Eq.~(\ref{kooppde}) be defined for an ODE, $\dot{x}=F(x)$, with a flow $x(r)=\varrho_r(x_0):M\times {\mathbb R}\rightarrow M$.  Assume a closed {\color{black} co-dimension-one} initial (data) manifold $\Lambda \subset M$ that is nonrecurrent for some time epoch, $r\in [t_1,t_2]$, that contains $0$, {\color{black} and transverse to the flow,} and let $U=\cup_{t\in [t_1,t_2]} \varrho_t(\Lambda)$ be the resulting nonrecurrent closed domain.  Furthermore, let a $C^1(\Lambda)$ initial data function be defined, $h:\Lambda\rightarrow {\mathbb C}$.  (See Fig.~\ref{pullback}). Then an open-KEIGS pair, $(\lambda,\phi_\lambda(x)), \phi_\lambda:U\rightarrow {\mathbb C}$ has the form,
\begin{equation}\label{gensol}
    \phi_\lambda(x)=h\circ s^*(x)e^{\lambda r^*(x)},  
\end{equation}
where $r^*(x)$ is the ``time"-of-flight such that for a point $x\in U$,  
there is an intersection in $U$ by pull back to the data surface $\Lambda$, 
\begin{equation}\label{rstar}
%    r^*(x)=\inf_r \{r: \varrho_{-r}(x)\cap \Lambda\neq \emptyset\}.(primary
r^*(x)= \{r: \varrho_{-r}(x)\cap \Lambda\neq \emptyset\}.
\end{equation}  
For  each  $x\in U$, 
\begin{equation}\label{sstar}
    s^*(x)=s\circ \varrho_{-r^*(x)}(x),
\end{equation} {\color{black} where $s$ is the parameterization on $\Lambda$. Overloading  notation in naming a function, $s(y)$ states the parameterization for a point $y\in U\in \Lambda$, and  finally $s^*:U \rightarrow \Lambda$ is the position on $\Lambda$ of the pull back from $x$ to  that first intersection point on $\Lambda$.}
\end{theorem}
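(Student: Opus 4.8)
The plan is to solve the quasi-linear PDE of Theorem \ref{thekoopmanpdetheorem} by the method of characteristics (reviewed in Appendix \ref{characteristics}), whose characteristic curves are precisely the orbits $r\mapsto\varrho_r(x_0)$ of the underlying ODE. First I would observe that along any such orbit the chain rule collapses the PDE into a scalar linear ODE: writing $\psi(r):=\phi_\lambda(\varrho_r(x_0))$ and invoking Eq.~(\ref{kooppde}),
\[
\frac{d}{dr}\psi(r) = \nabla\phi_\lambda\cdot F(\varrho_r(x_0)) = \lambda\,\phi_\lambda(\varrho_r(x_0)) = \lambda\,\psi(r).
\]
Integrating gives $\psi(r)=\psi(0)\,e^{\lambda r}$, so the value of an eigenfunction at any point of an orbit is determined by its value at the orbit's basepoint on $\Lambda$, amplified by the factor $e^{\lambda r}$. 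Prescribing that basepoint value through the data function $h$ produces the candidate formula Eq.~(\ref{gensol}).

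Next I would make the assignment $x\mapsto(s^*(x),r^*(x))$ rigorous and single-valued, where the hypotheses do the essential work. Transversality of $\Lambda$ to $F$ ensures, via the flow-box (straightening) theorem, that the map $\Phi:(y,r)\mapsto\varrho_r(y)$, $y\in\Lambda$, is a local $C^1$ diffeomorphism; the nonrecurrence hypothesis over the epoch $[t_1,t_2]$ forbids any orbit from meeting $\Lambda$ more than once (the loop/re-entry pathology flagged after Theorem \ref{thekoopmanpdetheorem}), making $\Phi$ injective and hence a $C^1$ diffeomorphism from $\Lambda\times[t_1,t_2]$ onto $U$. Inverting $\Phi$ produces the time-of-flight $r^*(x)$ of Eq.~(\ref{rstar}) and the pull-back point $\varrho_{-r^*(x)}(x)$ as $C^1$ functions of $x$, and composing the latter with the parameterization gives $s^*(x)$ of Eq.~(\ref{sstar}) in $C^1(U)$. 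With $h\in C^1(\Lambda)$, the composite $\phi_\lambda=(h\circ s^*)\,e^{\lambda r^*}$ then lies in $C^1(U)$, as required.

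Finally I would verify directly that the candidate satisfies the PDE, using two geometric identities that follow from the construction. Because $r^*$ measures elapsed flow time, advancing by the flow increments it at unit rate, so $F\cdot\nabla r^*=1$; because the pull-back basepoint is constant along an orbit, $s^*$ (and therefore $h\circ s^*$) is a first integral, so $F\cdot\nabla(h\circ s^*)=0$. Applying the product rule,
\[
F\cdot\nabla\big[(h\circ s^*)\,e^{\lambda r^*}\big]
= \big(F\cdot\nabla(h\circ s^*)\big)\,e^{\lambda r^*}
+ (h\circ s^*)\,\lambda e^{\lambda r^*}\big(F\cdot\nabla r^*\big)
= \lambda\,(h\circ s^*)\,e^{\lambda r^*},
\]
which is exactly Eq.~(\ref{kooppde}) with $\phi_\lambda$ as in Eq.~(\ref{gensol}).

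I expect the main obstacle to be the regularity and single-valuedness of $r^*$ and $s^*$ rather than the algebra: one must lean on transversality for the smooth local structure and on nonrecurrence for global injectivity, and it is here that the restriction to the epoch $[t_1,t_2]$ and the closedness of $U$ matter. Without nonrecurrence the time-of-flight is only defined modulo return periods, the map $\Phi$ fails to be injective, and the formula Eq.~(\ref{gensol}) can become inconsistent (the eigenvalue condition $e^{\lambda T}=1$ on a period-$T$ loop being the tell-tale obstruction); the hypotheses of the theorem are precisely what remove this difficulty.
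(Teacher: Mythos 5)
Your proposal is correct and follows essentially the same route as the paper: the characteristics of the quasi-linear PDE Eq.~(\ref{kooppde}) are the orbits of the flow, along which the equation reduces to $\dot z=\lambda z$, so the data $h$ on $\Lambda$ propagates with the factor $e^{\lambda r}$, yielding Eq.~(\ref{gensol}) via the pull-back maps $r^*$ and $s^*$ whose well-definedness rests on transversality and nonrecurrence, exactly as in the paper's Appendix~\ref{characteristics} argument (Eqs.~(\ref{pf1})--(\ref{pf2})). Your additions --- the explicit flow-box argument for $C^1$ regularity of $(s^*,r^*)$ and the direct verification that the candidate satisfies the PDE via $F\cdot\nabla r^*=1$ and $F\cdot\nabla(h\circ s^*)=0$ --- are welcome refinements of steps the paper leaves implicit, not a different method.
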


%{\color{black} The data manifold $\Lambda$ is parameterized by $s$, with the function $s^*(x):M\rightarrow \Lambda$ by (\ref{sstar}) as pull back by the flow.} %itemm 3
See Fig.~\ref{pullback} as a description of an ``open"-eigenfunction \cite{bollt2018matching,mezic2017koopman} which is easily understood as resulting  from propagating the data by the method of characteristics.  {\color{black} For clarity, our examples and pictures illustrate  $M \subset {\mathbb R}^2$ and the co-dimension-one data manifold $\Lambda$ is a curve.} Since  finite domains are sufficient,  where the flow may leave the domain in finite time, an open-eigenfunction is the composition operator analogue of an eigenfunction of the Koopman operator. %, where the later may suggest  an unbounded time domain associated with a flow, $x(r)=\varrho_r(x_0)$ that maps from position $x_0$ to $x(r)$ by time $r$.
{\color{black} Understanding that,  $\Lambda \subset int(U)$, then with $t_1<0<t_2$ defines an eigenfunction that extends forward in time (upstream) and backward in time from $\Lambda$.}
  Notice that by asserting nonrecurrence of $\Lambda$, that $U$ is constructed to be nonrecurrent.  
%{\color{black} %The solution form Eq.~(\ref{gensol}) yields a general closed form representation of an eigenfunction when a closed form solution of the underlying differential equation is available.  Otherwise, interpreting Eqs.~(\ref{rstar})-(\ref{sstar})  pointwise as function calls to a numerical differential equation solver approximation of $\varrho$,  propagating from $x$ backward in time stopping upon the event of  $\varrho_{-r^*(x)}(x)\in \Lambda$.  Such  event stopping criterion is closely related to the common method of numerical implementation of a Poincare' map.}  

%Each $x \in U$, $\{r: \varrho_{-r}(x)\cap \Lambda\neq \emptyset\}$ will consist of a single value, which may be positive or negative depending on if it is upstream or downstream of $\Lambda$.  
%Eq.~(\ref{rstar}) is written as a set, but it has a single value for each $x$, by assumption of nonrecurrence.
%, and the role of the $\min$ serves simply to select that value time of flight. 
\begin{proof}
{\color{black}
In Appendix Sec.~\ref{characteristics}, we review the standard method of characteristics \cite{john1975partial} for quasi-linear PDE's including the Koopman PDE Eq.~(\ref{kooppde}), which is the central to the derivation of Eq.~(\ref{gensol}).    In brief, a problem of the form, $a(x_1,x_2,u)u_{x_1} + b(x_2,x_2,u) u_{x_2}=c(x_1,x_2,u),\mbox{ } (x_1,x_2)\in \Omega$ can be interpreted as $\nabla U \cdot <a,b,-c>=<u_{x_1},u_{x_2},-1>\cdot<a,b,-c>=0$.  So transversally to the surface $\Lambda$, $\nabla U$  is  also perpendicular to the vector field $<a,b,c>$ that defines characteristic curves which are therefore solutions of an ODE, $\frac{dx_1}{dr}=a(x_1,x_2,z), \frac{dx_2}{dr}=b(x_1,x_2,z), \frac{dz}{dr}=c(x_1,x_2,z),$  whose initial conditions we will choose on the data surface $\Lambda$ that is parameterized by $\Lambda$.  The many variable version of this statement is similar.  
The rest of the proof is in Appendix \ref{characteristics}, Eqs.~(\ref{pf1})-(\ref{pf2}).    In brief, the solution of the Koopman PDE implies that initial data emanates along characteristics that are solutions of the flow.  }%itemm 2
See Fig.~\ref{pullback}, where we illustrate that a general solution of the eigenfunction PDE is simply a pull back along the flow through $x$, to read the data on $\Lambda$, and then scale it according to the linear action of $(e^\lambda)^r$, for the ``time" it takes to pull back the point $r=r^*(x)$.   
From $x\in U$, ``pull"-back along the flow from $x$ to $\Lambda$ is to where initial data $h$ is defined.  Therefore, the value of $\phi_\lambda$ at $x$ is $h$ but scaled  as a function of the  time of the pullback: $e^{\lambda r}$.  
\end{proof}

\begin{figure}[htbp]
\centering
\includegraphics[scale=1]{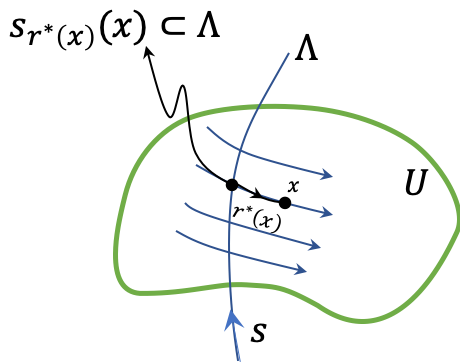}
\caption{\label{pullback} A general {\color{black} open eigenfunction,} Eq.~(\ref{gensol}) solution of Eq.~(\ref{kooppde}) is defined in terms of measuring the initial data $\protect{h(s)}$ on the data surface $\protect{\Lambda}$, and for each $\protect{x}$ there is a unique point where that measurement is taken.  Then the solution $\protect{\phi_\lambda}$ is a linear scaling of that measurement, by the time of the pull back.}
\end{figure}

\begin{remark}
Given a point $x\in U$, finding $r^*(x)$ is a task that is computationally comparable to a shooting problem, pulling $x$ back to the data curve $\Lambda$ along the flow, and recording the ``time"-of-flight.
%This is comparable to what one does numerically when computing a Poincare' section map for a differential equation  
For some problems where the flow is known in closed form, this may be solvable analytically.  On the other hand, a ``Poincare'" section type solver can be developed to numerically continue from initial condition $x$ by the backward flow,
$    \dot{x}=-F(x).$ From this follows $s^*(x)$ by Eq.~(\ref{sstar}).   {\color{black} So the eigenfunction can be stated in closed form when the solution of the flow is known in closed form, or otherwise, but numerical shooting and event stopping using a numerical integrator, it is defined pointwise and numerically.}
\end{remark}

The simple form of the eigenfunction solutions, $\phi_\lambda(x)$, Eq.~(\ref{gensol}), {\color{black} also has a simple geometric interpretation.}  Notice that since $s^*(x)$ is constant along the flow, then $h\circ s^*(x)$ is also constant along the flow.  Therefore,
\begin{equation}
    \phi_\lambda(x) e^{-\lambda r^*(x)}=h\circ s^*(x).
\end{equation}
These functions eigenfunctions, $\phi_\lambda$ have level sets which when compared to an orbit, evolve as simply  summarized:
\begin{corollary}
If $x$ and $\tilde{x}$ are on the same orbit in $U$, so that $x=\varrho_r(\tilde{x})$,
%for some $r\in {\mathbb R}$, 
then,
\begin{equation}\label{res}
    \phi_\lambda(x)=\phi_\lambda (\tilde{x})e^{\lambda r},
\end{equation}
where $r$ is the ``time" to flow from $\tilde{x}$ to $x$.   
\end{corollary}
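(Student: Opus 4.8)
The plan is to read off the result directly from the explicit form of the eigenfunction supplied by Theorem \ref{gensolkoop}, Eq.~(\ref{gensol}), namely $\phi_\lambda(x) = h\circ s^*(x)\, e^{\lambda r^*(x)}$. The whole argument rests on two elementary properties of the pull-back data $(s^*, r^*)$ along a single orbit: that $s^*$ is invariant along the flow, and that $r^*$ is additive along the flow. Both follow from the nonrecurrence of $U$, which guarantees that for each point there is a \emph{unique} first intersection with $\Lambda$ under backward flow, so that $s^*$ and $r^*$ are genuinely single-valued functions on $U$.

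First I would establish the invariance of $s^*$ together with the additivity of $r^*$. Since $x = \varrho_r(\tilde{x})$, I compute the backward flow of $x$ as $\varrho_{-r^*(x)}(x) = \varrho_{r - r^*(x)}(\tilde{x})$. The point so reached lies on $\Lambda$ by the defining property of $r^*(x)$, and by uniqueness of the first intersection it must coincide with the point $\varrho_{-r^*(\tilde{x})}(\tilde{x}) \in \Lambda$ obtained by pulling $\tilde{x}$ back. Matching the flow times forces the key relation
\[ r^*(x) = r^*(\tilde{x}) + r, \]
while the matching of the intersection points themselves gives $s^*(x) = s^*(\tilde{x})$, and hence $h\circ s^*(x) = h\circ s^*(\tilde{x})$.

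With these two facts in hand, the conclusion is a one-line substitution into Eq.~(\ref{gensol}):
\[ \phi_\lambda(x) = h\circ s^*(x)\, e^{\lambda r^*(x)} = \left(h\circ s^*(\tilde{x})\right) e^{\lambda(r^*(\tilde{x}) + r)} = \left(h\circ s^*(\tilde{x})\, e^{\lambda r^*(\tilde{x})}\right) e^{\lambda r} = \phi_\lambda(\tilde{x})\, e^{\lambda r}. \]

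I do not anticipate a genuine obstacle here; the content is bookkeeping rather than analysis. The one point demanding care is the sign and additivity convention for the time-of-flight, and the implicit requirement that $x$, $\tilde{x}$, and the entire connecting orbit segment lie in the nonrecurrent domain $U$, so that a common pull-back point on $\Lambda$ exists and is unique. As a consistency check, the identity is precisely the defining eigenvalue relation $\mathscr{K}_r[\phi_\lambda](\tilde{x}) = \phi_\lambda(\varrho_r(\tilde{x})) = e^{\lambda r}\phi_\lambda(\tilde{x})$ of Eq.~(\ref{eq:koopman definition}) restricted to a single orbit, confirming that the characteristics formula of Theorem \ref{gensolkoop} is compatible with the original composition-operator definition of a KEIG.
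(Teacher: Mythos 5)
Your proof is correct and takes essentially the same route as the paper, which justifies the corollary by the observation immediately preceding it that $s^*$ (hence $h\circ s^*$) is constant along the flow, so that $\phi_\lambda(x)e^{-\lambda r^*(x)}$ is an orbit invariant; you simply make explicit the additivity $r^*(x)=r^*(\tilde{x})+r$ that the paper leaves implicit. The closing consistency check against Eq.~(\ref{eq:koopman definition}) is a nice touch but not a departure from the paper's argument.
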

This statement reduces to the general form Eq.~(\ref{gensol}), on the data set $\tilde{x}\in \Lambda$.  
The dynamics evolves the eigenfunction by linear scaling of initial data, allowing when real valued, monotone increasing or decreasing, or when complex valued eigenvalues, this linear scaling includes oscillations.  In this sense, the observable dynamics of an eigenfunction, are especially simple as defined by the level sets of $\phi_\lambda(x) e^{-\lambda r^*(x)}$.
This statement requires that we recall a definition of level sets for a general function, $f:U\rightarrow V$. We use the notation for a level set, of level $c$ in the range $V$, from the domain $U$ to be the set,
\begin{equation}\label{levelset}
    L_c(f)=\{x:f(x)=c, x\in U\}.
\end{equation}
If $c\notin V$, then $L_c(f)=\emptyset$.

The following corollary describes  a general principle that initial data curves, and therefore the data defined on them, {\color{black} can otherwise be stated more conveniently elsewhere in the domain.}
A seemingly complicated initial data functions, $h:\Lambda \rightarrow {\mathbb C}$ may correspond to another data curve $\tilde{\Lambda}$, its image under the flow and corresponding data function on it, $\tilde{h}(\tilde{s})$, {\color{black} an allusion to Remark \ref{r3}.}  $s$ describes a parameterization on $\Lambda$ and $\tilde{s}$ describes a parameterization on $\tilde{\Lambda}$, so that corresponding points $\Lambda_s$ flows to $\tilde{\Lambda}_{\tilde{s}}$ under the flow, $\varrho$.  Specifically,

\begin{corollary}\label{principle} If $h(s):\Lambda \rightarrow {\mathbb C}$ data is defined on $\Lambda\in U$, then the corresponding equivalent data on $\tilde{\Lambda}\in U$ is a function $\tilde{h}(\tilde{s}): \tilde{\Lambda} \rightarrow {\mathbb C}$ if corresponding points $\tilde{x} \in \tilde{\Lambda}$ flow to $x \in \Lambda$, $x=\varrho_{r^*(x)}(\tilde{x})$.  With the same smoothness as the flow, there exists a transformation, $\alpha:  \tilde{\Lambda}\rightarrow \Lambda$
such that $\tilde{h}(\tilde{s})=h\circ \alpha (s)$.
\end{corollary}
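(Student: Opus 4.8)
The plan is to realize $\alpha$ as the section-to-section map induced by the flow, built directly from the pull-back data $r^{*}$ and $s^{*}$ of Theorem \ref{gensolkoop}, and then to inherit its regularity from smooth dependence of the flow on initial conditions together with transversality. First I would note that since $\tilde{\Lambda}\subset U$ and $U=\cup_{t\in[t_1,t_2]}\varrho_t(\Lambda)$ is the nonrecurrent flow-out of $\Lambda$, every $\tilde{x}\in\tilde{\Lambda}$ lies on an orbit that meets $\Lambda$, and by nonrecurrence it meets $\Lambda$ exactly once inside $U$. Hence the time-of-flight $r^{*}(\tilde{x})$ of Eq.~(\ref{rstar}) and the pull-back point $s^{*}(\tilde{x})$ of Eq.~(\ref{sstar}) are single-valued on $\tilde{\Lambda}$, and I would define $\alpha(\tilde{x}):=\varrho_{-r^{*}(\tilde{x})}(\tilde{x})=s^{*}(\tilde{x})\in\Lambda$, that is, in parameters $\tilde{s}\mapsto s=s^{*}(\tilde{x})$. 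This is exactly the point correspondence asserted in the statement (up to the sign convention for the direction of flight along the orbit carrying $\tilde{x}$ to its intersection with $\Lambda$).

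The crux is the smoothness of $r^{*}$, hence of $\alpha$, and this is the step I expect to be the main obstacle, since it is where both hypotheses of Theorem \ref{gensolkoop} become indispensable. I would write $\Lambda$ locally as a regular level set $\{x: G(x)=0\}$ of a submersion $G$, possible because $\Lambda$ is a co-dimension-one manifold with the smoothness of the flow, so that $r^{*}(\tilde{x})$ is implicitly defined by $G(\varrho_{-r}(\tilde{x}))=0$. Differentiating in $r$ gives $\frac{d}{dr}G(\varrho_{-r}(\tilde{x}))=-\nabla G(\varrho_{-r}(\tilde{x}))\cdot F(\varrho_{-r}(\tilde{x}))$, which is nonzero precisely because the flow is transverse to $\Lambda$. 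The implicit function theorem then yields $r^{*}$ in the same differentiability class as the flow, and since $\varrho_t(x)$ is jointly as smooth in $(t,x)$ as the vector field $F$ by the standard theorem on smooth dependence on initial conditions, the composition $\alpha(\tilde{x})=\varrho_{-r^{*}(\tilde{x})}(\tilde{x})$ is as smooth as the flow. Transversality keeps $\nabla G\cdot F\neq 0$ so that the implicit solve is nondegenerate, while nonrecurrence keeps $r^{*}$ globally single-valued across the whole section; neither alone suffices.

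To see that $\alpha$ is a genuine transformation, I would construct its inverse by flowing forward from $\Lambda$ to $\tilde{\Lambda}$ and run the identical implicit-function argument with $\tilde{\Lambda}$ written as a level set; transversality of the flow to $\tilde{\Lambda}$ furnishes a smooth inverse, so $\alpha$ is a diffeomorphism onto its image, and onto $\Lambda$ when the orbits through $\Lambda$ also meet $\tilde{\Lambda}$ (otherwise I would restrict the domains to the common orbit set). Setting $\tilde{h}:=h\circ\alpha$ then produces a data function on $\tilde{\Lambda}$ that is as smooth as $h$ composed with the flow, which is the claimed relation $\tilde{h}(\tilde{s})=h\circ\alpha(s)$. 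I would close with the remark that if one further demands the two constructions to yield the \emph{identical} eigenfunction on all of $U$ rather than merely transported data, then by Eq.~(\ref{res}) the values must be rescaled along each orbit, so the fully equivalent data is $(h\circ\alpha)\,e^{\lambda r^{*}}$; dropping this eigenvalue-dependent factor recovers the purely geometric correspondence displayed in the statement.
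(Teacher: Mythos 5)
Your proposal takes the same underlying route as the paper --- the flow-induced point correspondence $\tilde{x}\mapsto s^{*}(\tilde{x})$ built from the pull-back data $(r^{*},s^{*})$ of Theorem \ref{gensolkoop} --- but where the paper disposes of the corollary with a one-line remark (``this is just a reinterpretation of Eq.~(\ref{res})'') and simply asserts the smoothness of $\alpha$, you actually prove it: writing $\Lambda$ locally as a regular level set and applying the implicit function theorem to $G(\varrho_{-r}(\tilde{x}))=0$, with transversality guaranteeing the nondegeneracy $\nabla G\cdot F\neq 0$ and nonrecurrence guaranteeing global single-valuedness of $r^{*}$, is exactly the missing argument, and your separation of the two hypotheses (``neither alone suffices'') is correct and worth stating. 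Your closing observation is also valuable: the corollary as written transports only the data, $\tilde{h}=h\circ\alpha$, whereas the paper's own worked example Eq.~(\ref{pullbackcurve}) carries the extra factor $s^{\lambda/a_{1}}$, which is precisely the orbit-rescaling $e^{\lambda r^{*}}$ you identify as necessary if one wants the two data curves to generate the \emph{same} eigenfunction via Eq.~(\ref{res}); flagging that the purely geometric correspondence and the eigenfunction-preserving correspondence differ by this eigenvalue-dependent factor reconciles the statement with the example, a point the paper leaves implicit.
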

This is just a reinterpretation of Eq.~(\ref{res}).  The idea is, if $\tilde{\Lambda}$ is an image of $\Lambda$ under the flow, then the result follows if $\tilde{h}$ on $\tilde{\Lambda}$ corresponds accordingly to $h$
on $\Lambda$.
Consider that for each $x\in U$, then $(r^*(x),s^*(x))$ has already been defined as the pull back time and parameterized position on $\Lambda$, and $h$ can be evaluated there.  Or likewise, $(\tilde{r}(x),\tilde{s}(x))$ are the pull back onto $\tilde{\Lambda}$.    

\begin{remark} \label{r3} {\color{black} A simple interpretation following Corollary 1 and 2 is that there is a ``dual" characterization of an eigenfunction.  Either we can focus on an arbitrary transverse manifold $\Lambda$, and define a data function $h$ on $\Lambda$, with the eigenfunction $\phi_\lambda$ that follows,  Eq.~(\ref{gensol}).  Alternatively, for that same eigenfunction $\phi_\lambda$, we can focus on  a given $h$ is a constant $h=c$, and then $\Lambda$ is the corresponding level set of $\phi_\lambda|_\Lambda=c$.}%\itemm 4
\end{remark}

To illustrate that data curves $\Lambda$ and how the data functions defined on them, $h(s)$ evolves under the flow, consider the linear example worked out extensively in the Appendix, Eqs.~(\ref{ceigs}), (\ref{ceigs2}) from a  linear differential equation system. Eigenfunctions are explicitly derived in the Appendix from two different transverse surfaces. One data surface $\Lambda $ is chosen to be a horizontal line, and then the other $\tilde{\Lambda}$ is chosen to be a vertical line.  In the first case, we found the observer function $(a_2,x_2)$ by special choice $\lambda_2=a_2$ and a constant data function, but  in the second case, $(a_1,x_1)$ with $\lambda_1=a_1$.  See Fig.~\ref{figcb}.
\begin{equation}\label{pull1}
    \tilde{s}=s^{-\frac{a_2}{a_1}},
\end{equation}
and so,
\begin{equation}\label{pullbackcurve}
    \tilde{h}(\tilde{s})=h(s^{-\frac{a_2}{a_1}})s^{\frac{\lambda}{a_1}}.
\end{equation}
defines the data on a vertical data curve $\Lambda$ from data originally stated on a horizontal curve $\Lambda$.  
\begin{figure}[htbp]
\centering
\includegraphics[scale=1]{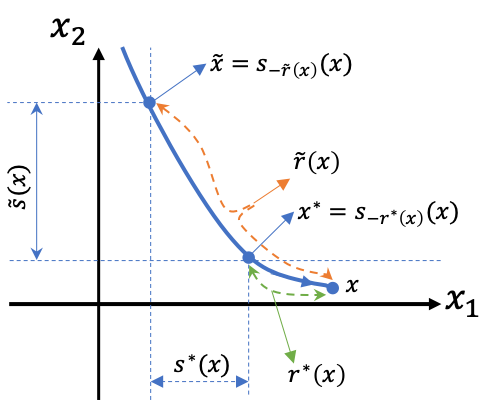}
\caption{   Data $h$ stated on one data curve $\Lambda$ can be pulled back to restated as $\tilde{h}$ on some other data curve $\tilde{\Lambda}$ here shown as the Example Eqs.~(\ref{pull1})-(\ref{pullbackcurve}) demonstrating the principle of Corollary \ref{principle}.
\label{figcb}
}
\end{figure}

%\end{corollary}

\section{Algebraic Structure and Cardinality of Eigenfunctions}%Cardinality, First Discussion Follows The Algebraic Structure

There are at least two major reasons for the fact of uncountably many eigenfunctions.
First there is a simple and well known,  algebraic property of eigenvalues and eigenfunctions as follows.  But this is not the exclusive source of these many eigenfunctions.
\begin{proposition}  Algebraic Property.  The  KEIGS of a Koopman operator of a flow form a semigroup as follows.  For any $0<\alpha_1,\alpha_2<\infty$, and pair of KEIGS $(\lambda_1,\phi_{\lambda_1}(x))$, $(\lambda_2,\phi_{\lambda_2}(x))$  in an open domain $U$, then,
\begin{equation}\label{algprop}
(\lambda,\phi_\lambda(x))=((\alpha_1 \lambda_1+\alpha_2 \lambda_2),(\phi_{\lambda_1}(x))^{\alpha_1}(\phi_{\lambda_2}(x))^{\alpha_2}),
\end{equation}
 is also a KEIGS if $\phi_\lambda(x)$ exists in $U$. 
\label{algebraicprop}
\end{proposition}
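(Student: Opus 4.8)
The plan is to verify that the proposed pair satisfies the defining eigenfunction equation, and the most economical route runs through the infinitesimal generator PDE of Theorem~\ref{thekoopmanpdetheorem}, Eq.~(\ref{kooppde}). The key structural fact I would exploit is that $\mathcal{L}=F\cdot\nabla$ is a first-order differential operator, hence a derivation: it obeys both the Leibniz product rule and the chain rule. First I would set $\phi_\lambda := \phi_{\lambda_1}^{\alpha_1}\phi_{\lambda_2}^{\alpha_2}$ and differentiate each power factor. Since each $\phi_{\lambda_i}$ solves $F\cdot\nabla\phi_{\lambda_i}=\lambda_i\phi_{\lambda_i}$, the chain rule gives $F\cdot\nabla(\phi_{\lambda_i}^{\alpha_i}) = \alpha_i\phi_{\lambda_i}^{\alpha_i-1}(F\cdot\nabla\phi_{\lambda_i}) = \alpha_i\lambda_i\phi_{\lambda_i}^{\alpha_i}$, i.e.\ each power $\phi_{\lambda_i}^{\alpha_i}$ is itself an eigenfunction with eigenvalue $\alpha_i\lambda_i$. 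Applying the product rule then yields
\[
F\cdot\nabla\big(\phi_{\lambda_1}^{\alpha_1}\phi_{\lambda_2}^{\alpha_2}\big) = (\alpha_1\lambda_1+\alpha_2\lambda_2)\,\phi_{\lambda_1}^{\alpha_1}\phi_{\lambda_2}^{\alpha_2},
\]
which is exactly Eq.~(\ref{kooppde}) with eigenvalue $\alpha_1\lambda_1+\alpha_2\lambda_2$, establishing the claim.

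A parallel and perhaps more transparent derivation works straight from the composition-operator Definition~\ref{koopmandefnD}, bypassing differentiability altogether. Because $\mathscr{K}_t$ acts by composition with $\varrho_t$, it is multiplicative pointwise: $\mathscr{K}_t[fg]=(\mathscr{K}_t f)(\mathscr{K}_t g)$ and $\mathscr{K}_t[f^{\alpha}]=(\mathscr{K}_t f)^{\alpha}$. Feeding in Eq.~(\ref{eq:koopman definition}) for each factor gives $\mathscr{K}_t[\phi_\lambda] = (e^{\lambda_1 t}\phi_{\lambda_1})^{\alpha_1}(e^{\lambda_2 t}\phi_{\lambda_2})^{\alpha_2} = e^{(\alpha_1\lambda_1+\alpha_2\lambda_2)t}\phi_\lambda$, recovering the eigenvalue $\alpha_1\lambda_1+\alpha_2\lambda_2$ and the multiplier $b=e^{\alpha_1\lambda_1+\alpha_2\lambda_2}$. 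I would present the generator computation as the main argument and mention this one as a check, since the two illuminate the same semigroup structure from the analytic and the algebraic sides.

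The algebra here is routine; the genuine obstacle---and the reason the statement is hedged with the proviso ``if $\phi_\lambda(x)$ exists in $U$''---is the \emph{well-definedness and regularity of the non-integer complex powers} $\phi_{\lambda_i}^{\alpha_i}$. For complex-valued $\phi_{\lambda_i}$ and real $\alpha_i\notin\mathbb{Z}$, the power $\phi_{\lambda_i}^{\alpha_i}=\exp(\alpha_i\log\phi_{\lambda_i})$ requires a consistent branch of the logarithm, which exists only on a simply connected subdomain avoiding the zero set $L_0(\phi_{\lambda_i})$ of Eq.~(\ref{levelset}); at zeros of $\phi_{\lambda_i}$ the power, and hence the chain-rule factor $\phi_{\lambda_i}^{\alpha_i-1}$, can blow up or fail to be $C^1$. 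Thus the careful version of the proof restricts attention to a subdomain of $U$ on which all factors are nonvanishing and a single-valued branch is available, and there both the product-rule and chain-rule steps are legitimate. This localization is precisely what the clause ``if $\phi_\lambda(x)$ exists in $U$'' encodes, and it is where I expect the only real subtlety to lie.
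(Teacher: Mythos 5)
Your main argument is essentially identical to the paper's proof in Appendix~\ref{pflin}: substitute $\phi_{\lambda_1}^{\alpha_1}\phi_{\lambda_2}^{\alpha_2}$ into the generator PDE~(\ref{kooppde}) and apply the product and chain rules, then note the correspondence $b=e^{\alpha_1\lambda_1+\alpha_2\lambda_2}$ and the domain caveat for non-integer powers (the paper likewise flags branch cuts of the resulting Riemann surface). Your secondary composition-operator check and the more explicit discussion of where a single-valued branch of $\log\phi_{\lambda_i}$ exists are welcome additions, but the core proof matches the paper's.
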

A proof is found in Appendix \ref{pflin}, that appeals directly to the PDE, but other approaches are found elsewhere, such as \cite{budivsic2012applied}.

As a direct consequence,  powers of a KEIGS are also KEIGS as we see by choosing, $\phi_1=\phi_2$  in  Proposition \ref{algebraicprop}.
%\begin{lemma}
If $(\lambda,\phi_\lambda(x))$ is a KEIGS, then for any $s>0$, then $(\lambda^s,(\phi_\lambda(x))^s)=(\lambda^s,\phi_{\lambda^s}(x))$ is also a KEIGS.
%\end{lemma}
Consider a special case, where $\phi_1=\phi_2=1$ and we see that $\lambda_1\lambda_2$ is the eigenvalue of the product of eigenfunctions, $\phi_{\lambda_1}(x)\phi_{\lambda_1}(x)$.   Even more specialized, with the same eigenfunction, we see that $\lambda_1^2$ is an eigenvalue of $\phi_{\lambda_1}(x)^2$. Squares of eigenfunctions, and likewise  any other power for that matter, are eigenfunctions.

\begin{remark} {\bf Example.}
In
Appendix \ref{characteristics} example \ref{linearexample}, follows that   $\dot{x}=a x$, has the state observer as KEIGS:  
$(\lambda,\phi_\lambda(x))=(a,x).$
Therefore, by Proposition \ref{algebraicprop}, 
the following are also examples of KEIGS, $(\lambda,\phi_\lambda(x))=(2a,x^2$), $(\lambda,\phi_\lambda(x))=(3a,x^3$), etc., as well as $(\lambda,\phi_\lambda(x))=(\frac{1}{2} a,\sqrt{x})$, etc, in appropriate domain for $x$.  All of these follow by exponentiation directly from Proposition \ref{algebraicprop}, but also this is in agreement with the direct derivation of the KEIGS in Eq.~(\ref{gensol}) that yields, for $h=1$, $\phi_\lambda(x)=c x^{\frac{\lambda}{a}}$ for any $\lambda\in {\mathbb C}$.

%The proof is especially straightforward for integer values,

\end{remark}

The following is immediate due to the algebraic property associating KEIGS with exponentiation.
\begin{corollary}\label{uncount}
The cardinality of KEIGS is at least uncountable.  
%We will count more in \label{countmore}.
\end{corollary}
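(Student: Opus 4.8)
The plan is to exhibit an explicit uncountable family of pairwise-distinct KEIGS obtained by exponentiating a single nontrivial one, so that the continuum $(0,\infty)$ injects into the set of KEIGS. First I would secure the existence of at least one KEIGS $(\lambda,\phi_\lambda)$ with $\lambda\neq 0$. This is guaranteed by Theorem \ref{gensolkoop}: choosing any nonzero $\lambda\in{\mathbb C}$ together with any nonconstant $C^1$ data function $h$ on a transverse surface $\Lambda$ produces the eigenfunction $\phi_\lambda = h\circ s^*\, e^{\lambda r^*}$ on $U$. Concretely one may take the linear example $\dot{x}=ax$ with $a\neq 0$, whose state observer $(a,x)$ is a KEIGS.

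Next I would apply the exponentiation consequence of Proposition \ref{algebraicprop}. Setting $\phi_1=\phi_2=\phi_\lambda$ and $\lambda_1=\lambda_2=\lambda$, the formula $(\alpha_1\lambda_1+\alpha_2\lambda_2,\ \phi_{\lambda_1}^{\alpha_1}\phi_{\lambda_2}^{\alpha_2})$ collapses, upon writing $s=\alpha_1+\alpha_2$, to the statement that for every $s>0$ the pair $(s\lambda,\ \phi_\lambda^{\,s})$ is again a KEIGS. This yields a one-parameter family $\{(s\lambda,\phi_\lambda^{\,s})\}_{s\in(0,\infty)}$. The decisive step is injectivity of the indexing: since $\lambda\neq 0$, the map $s\mapsto s\lambda$ is injective on $(0,\infty)$, so the eigenvalues are pairwise distinct and hence the pairs are pairwise distinct as KEIGS. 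Because $(0,\infty)$ has the cardinality of the continuum, the set of KEIGS has cardinality at least $\mathfrak{c}$, which is uncountable, and the corollary follows.

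The main obstacle is the proviso in Proposition \ref{algebraicprop} that $\phi_\lambda^{\,s}$ must actually exist as a function on the relevant domain: raising a complex-valued or sign-changing function to a non-integer power need not be single-valued. I would dispatch this by restricting to an open subdomain on which $\phi_\lambda$ is real and positive (or merely nonvanishing, with a fixed continuous branch of the power), which suffices since the cardinality claim is local in character. In the linear example this is transparent, as $\phi_\lambda(x)=c\,x^{\lambda/a}$ on $\{x>0\}$ gives $\phi_\lambda^{\,s}=c^{\,s}x^{\,s\lambda/a}$, a bona fide eigenfunction for the eigenvalue $s\lambda$. Alternatively, the branch issue can be sidestepped entirely: Theorem \ref{gensolkoop} already furnishes an eigenfunction for \emph{each} $\lambda\in{\mathbb C}$, and the uncountability of ${\mathbb C}$ gives the conclusion directly, with the exponentiation argument serving as the more elementary route flagged in the statement's remark.
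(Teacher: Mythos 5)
Your proposal is correct and follows essentially the same route as the paper, which derives the corollary immediately from the exponentiation consequence of Proposition \ref{algebraicprop}: a single nontrivial KEIGS generates the uncountable family $\{(s\lambda,\phi_\lambda^{\,s})\}_{s>0}$ with pairwise distinct eigenvalues. Your added care about branch cuts and the alternative appeal to Theorem \ref{gensolkoop} (one eigenfunction per $\lambda\in{\mathbb C}$) are both consistent with remarks the paper makes elsewhere, but the core argument is the same.
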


However, this algebraic property generating eigenfunctions is not the only source counting ``new" KEIGS, and furthermore, in terms of a sensible dimensional definition of new, that is ``primary", these are {not really new}.  {\color{black} In other words, principle eigenfunctions do not account for all of the primary eigenfunctions of KEIGS}.  We will define in the next section what we mean by {\it not really new}.  Another source of KEIGS due to freedom in choosing data functions on transverse curves generates  a cardinality  greater than the uncountability generated by the algebraic property just reviewed. We discuss these in the next section.

%Interestingly, $P_{\sigma}({\cal L})={\mathbb C}$, meaning the point spectrum is uncountable.  Therefore, the continuous spectrum  is empty, $C_{\sigma}({\cal L})=\emptyset$.  In fact this can be more nuanced in that the nature of the spectral decomposition, point, continuous and residual spectra are also balanced on the nature of the domains chosen,  particularly if the subdomain is recurrent, or further restricted to nonrecurrent, as we explore much more extensively in Appendix \ref{ctsspect}.  Furthermore, generally in spectral theory, there exist examples where any of these three spectrum may be empty, finite, infinite but countable, or uncountable. So the names themselves may seem misleading, particularly as the name point spectrum suggest countable, and continuous spectrum suggests a continuum, counter to what may be the truth of their cardinal nature.

%\section{{\color{black} Cardinality, Geometric Considerations and the Primary Eigenfunctions Concept}}\label{Primarysec}%Primary Eigenfunctions Complete the Geometry
\section{Definition of Primary Eigenfunctions to Complete the Geometry }\label{Primarysec}%Primary Eigenfunctions Complete the Geometry  Primary KEIGS complete the geometry Primary Eigenfunctions for Algebraic and Geometric Aspects of Cardinality

While the algebraic property generates at least uncountably many KEIGS, we have argued recently \cite{bollt2018matching} and likewise others have pointed out \cite{korda2019optimal}, there is geometry encoded in the eigenfunctions.  %We will now argue that  there is no  geometry to be generated beyond the underlying Riemannian manifold.  
%So how do we resolve these seemingly conflicting statements, that there are so many eigenfunctions, perhaps infinitely many, but perhaps only a relatively low dimensional manifold?
{\color{black} To this end we  define  {\it primary eigenfunctions,} which is related  to ``principal" eigenfunctions as described in Mohr-Mezic, \cite{mohr2016koopman}, in that principal eigenfunctions account for the algebraic structure, and principal eigenfunctions are examples that generate  primary eigenfunctions, but there are more primary eigenfunctions that the discussion of principal does not emphasize.}

%manner meant to distinguish geometry versus algebra and to be described shortly {\color{black} in terms of comparing the sets of level sets.}}

Our definition of primary KEIGS is in terms of the set of level sets, Eq.~(\ref{levelset}), of eigenfunctions:
\begin{definition}\label{defnequiv}
(Primary KEIGS). For any one KEIGS pair, 
$ ( \lambda, \phi_{\lambda}(x)) $, 
$ \lambda\in \mathbb{C}, x\in U\subset  M$, we may form an equivalence relationship to any other KEIGS pair $(\tilde{\lambda},\tilde{\phi}_{\tilde{\lambda}}(x))$, $\tilde{\lambda}\in \mathbb{C}$, defined in the same domain, $x\in U$, that share the same set of level sets in $U$.  That is, write 
$ (\lambda,\phi_{\lambda}(x)) \doteq (\tilde{\lambda},\tilde{\phi}_{\tilde{\lambda}}(x))$ iff in terms of the set of level sets, for each $c\in V$, there is a $\tilde{c} \in V$ such that $L_c(\phi_\lambda)=L_{\tilde{c}}(\tilde{\phi}_{\tilde{\lambda}})$.  That is the level sets match between the functions, although not necessarily corresponding to the same levels.  Then, continuing to refer in terms of any one specific KEIGS, $(\lambda,\phi_\lambda(x))$, we define an equivalence class of  KEIGS,
\begin{equation}
    \overline{ (\lambda,\phi_\lambda(x)) } :=\{ (\tilde{\lambda},\tilde{\phi}_{\tilde{\lambda}}(x)):  (\tilde{\lambda},\tilde{\phi}_{\tilde{\lambda}}(x))\doteq (\lambda,\phi_\lambda (x)) \}
\end{equation}
%class amongst all those eigenfunctions that follow exponentiation from that one (arbitrarily or otherwise conveniently chosen) KEIGS, that we denote by the ``overbar" notation, 
%\begin{equation}
%    (\lambda,\overbar{\phi_\lambda}(x):=\{(\lambda^p,({\phi_\lambda}(x))^p,p\in {\mathbb R}\}.
%\end{equation}
That is, the set of KEIGS that are ``$\doteq$" equivalent to a specific chosen (convenient) member $(\lambda,\phi_\lambda(x))$ of the equivalence class is a quotient set of functions.
\end{definition}

% \begin{comment}
% \begin{definition}\label{defnequiv}
% (Primary KEIGS) For any one KEIGS, $(\lambda,\phi_\lambda(x))$, $\lambda\in {\mathbb C}$, $x\in M\subset {\mathbb R}$, we may form an equivalence class of all those eigenfunctions that follow exponentiation from that one (arbitrarily or otherwise conveniently chosen) KEIGS, that we denote by the ``overbar" notation, 
% \begin{equation}
%     (\lambda,\overbar{\phi_\lambda}(x):=\{(\lambda^p,({\phi_\lambda}(x))^p,p\in {\mathbb R}\}.
% \end{equation}
% \end{definition}
% \end{comment}

%A simple example of this would be to describe the observer eigenvalue and eigenfunction (such as Eqs.~(\ref{obs1})-(\ref{obs2})) of a linear system, as a primary KEIGs.

There is a geometric statement that justifies this equivalence class, as follows.
\begin{theorem}\label{alggeothm}
Given a KEIGS, $(\lambda, \phi_\lambda(x))$, then  other KEIGS shares the same set of level sets if it is derived  by exponentiation following the algebraic property Eq.~(\ref{algprop}). Therefore,  the algebraic property of eigenfunctions {\color{black} yields a} primary KEIGS, at least in terms of inclusion allowing that general exponentiation may not exist.
\begin{equation}\label{inclusion}
    \{(\lambda^p,({\phi_\lambda}(x))^p,p\in {\mathbb R}\}\subset 
\overline{(\lambda, \phi_\lambda(x))}.
\end{equation}
\end{theorem}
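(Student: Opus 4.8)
The plan is to reduce the asserted inclusion to a single observation about fibers of functions, since the equivalence relation $\doteq$ of Definition \ref{defnequiv} compares exactly the collections of level sets. Recall that the level sets $\{L_c(\phi_\lambda)\}_{c\in V}$ are nothing more than the fibers of the map $\phi_\lambda:U\to{\mathbb C}$, i.e.\ the partition of $U$ induced by the relation $x\sim y \iff \phi_\lambda(x)=\phi_\lambda(y)$. The key elementary fact I would exploit is that post-composing a function with an injective map does not disturb its fibers: if $\Psi$ is injective on the range of $\phi_\lambda$, then $\phi_\lambda(x)=\phi_\lambda(y)$ if and only if $\Psi(\phi_\lambda(x))=\Psi(\phi_\lambda(y))$, so $\phi_\lambda$ and $\Psi\circ\phi_\lambda$ induce the identical partition of $U$ and hence share exactly the same set of level sets.

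Second, I would apply this with the power map $\Psi(z)=z^p$. For any $p\in{\mathbb R}$ for which the algebraic property Proposition \ref{algebraicprop} produces a bona fide KEIGS $(\lambda^p,(\phi_\lambda)^p)$ in $U$, we have the pointwise identity $(\phi_\lambda(x))^p=\Psi(\phi_\lambda(x))$, i.e.\ $(\phi_\lambda)^p=\Psi\circ\phi_\lambda$. It then remains only to record that $\Psi$ is injective on $V=\phi_\lambda(U)$, in which case the fiber argument gives, for each level $c\in V$, the set equality $L_c(\phi_\lambda)=L_{c^p}((\phi_\lambda)^p)$, with matching level $\tilde c=c^p$, and conversely every level set of $(\phi_\lambda)^p$ at level $\tilde c$ coincides with the level set of $\phi_\lambda$ at $c=\tilde c^{1/p}$. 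By Definition \ref{defnequiv} this is precisely $(\lambda^p,(\phi_\lambda)^p)\doteq(\lambda,\phi_\lambda)$, so $(\lambda^p,(\phi_\lambda)^p)\in\overline{(\lambda,\phi_\lambda(x))}$, establishing the inclusion. Note that the equivalence $\doteq$ asks only for a matching of level sets and not of eigenvalues, so the distinct eigenvalue $\lambda^p$ is no impediment.

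The step I expect to be the genuine obstacle, and the reason the statement claims only an inclusion ``allowing that general exponentiation may not exist,'' is the injectivity of $\Psi(z)=z^p$ on the range $V$. For real, single-signed, nonvanishing $\phi_\lambda$ and $p\neq 0$ the map $t\mapsto t^p$ is strictly monotone and the argument is immediate; but for complex-valued eigenfunctions the power $z^p$ requires a branch choice and is in general multivalued, so injectivity can fail and the preimage $\Psi^{-1}(\{\tilde c\})$ may split a single level set of $(\phi_\lambda)^p$ into several level sets of $\phi_\lambda$. I would handle this conservatively: those $p$ for which $(\phi_\lambda)^p$ fails to be a well-defined single-valued KEIGS in $U$ (including the degenerate $p=0$, where $(\phi_\lambda)^0\equiv 1$ has only the trivial level set) simply do not contribute members to the left-hand set, so they pose no threat to the inclusion; for the remaining $p$, the chosen branch makes $\Psi$ injective on $V$ and the fiber argument applies verbatim. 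This is exactly why equality cannot be claimed here and the result is phrased as the one-sided containment in Eq.~(\ref{inclusion}).
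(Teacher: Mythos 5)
Your proposal is correct and follows essentially the same route as the paper's proof, which likewise compares $L_c(\phi_\lambda)$ directly with $L_{c^p}\bigl((\phi_\lambda)^p\bigr)$ and notes the containment $L_c(\phi_\lambda)\subseteq L_{c^p}\bigl((\phi_\lambda)^p\bigr)$ with equality under sign/branch restrictions. If anything, your fiber-and-injectivity framing of the power map $\Psi(z)=z^p$ handles the branch-cut and sign-change cases more carefully than the paper's brief remark that equality holds ``at least when $c$ is positive, or $s$ is an even integer.''
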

\begin{proof}
Let,
\begin{equation}
    L_c(\phi_\lambda)=\{x: \phi_\lambda(x)=c, x\in M\},
\end{equation}
be the $c$ is a constant level set for a fixed $c\in {\mathbb C}$ in the range of $\phi_\lambda(x)$.  Then, 
\begin{equation}
    L_{c^s}(\phi_\lambda^s)=\{x: (\phi_\lambda(x))^s=c^s, x\in M\},
\end{equation}
is the $c^s$ constant level set of the eigenfunction $(\phi_\lambda(x))^s$ which is also an eigenfunction by the algebraic property. Algebraic manipulation clarifies that $L_c\subseteq L_{c^s}$, with equality at least when $c$ is positive, or $s$ is an even integer. 
\end{proof}

\begin{remark}
For convenience, we use the phrase ``primary KEIGS" to refer to the equivalence class of functions (that is the set of functions), but also for simplicity, sometimes we use the same phrase ``primary KEIGS" when referring to  a single eigenvalue, eigenfunction (KEIGS) pair that can be used to generate the entire set by exponentiation.  We do this especially when describing a favorite (simple) such pair.  

For example, consider the linear example, $\dot{x}=a x$, Appendix \ref{characteristics}, has in particular the primary KEIGS,  \begin{equation}\label{inclusion2}
\overline{(a,x)}=\{(a^p,x^p):p\in {\mathbb R}\}.
    \end{equation}
A particularly convenient member of this set follows the choice, $\lambda=a$.  In this case, we call the corresponding eigenfunction $\phi_a(x)=x$ as ``the state observer" since it is the identity function.  For convenience we ``overload the phrase," and drop the overline if stating that $(a,x)$ is a primary KEIGS, also referring to the set of equivalent  KEIGS.  See Fig.~\ref{Fig5}, where we show several equivalent KEIGS in this primary KEIGS.
\begin{figure}[hbtp]
\centering
\includegraphics[scale=0.17]{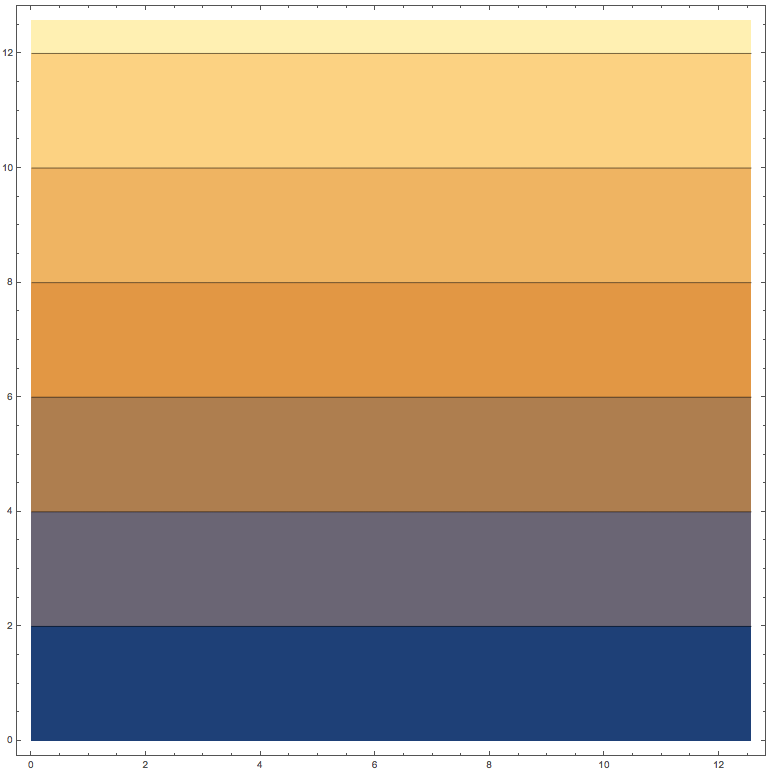}
\includegraphics[scale=0.17]{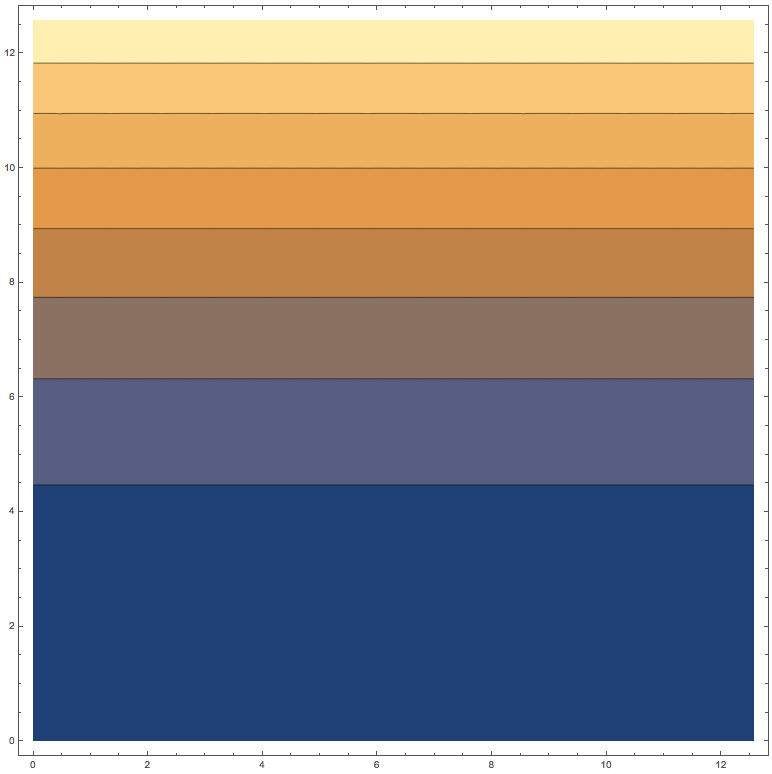}
\includegraphics[scale=0.17]{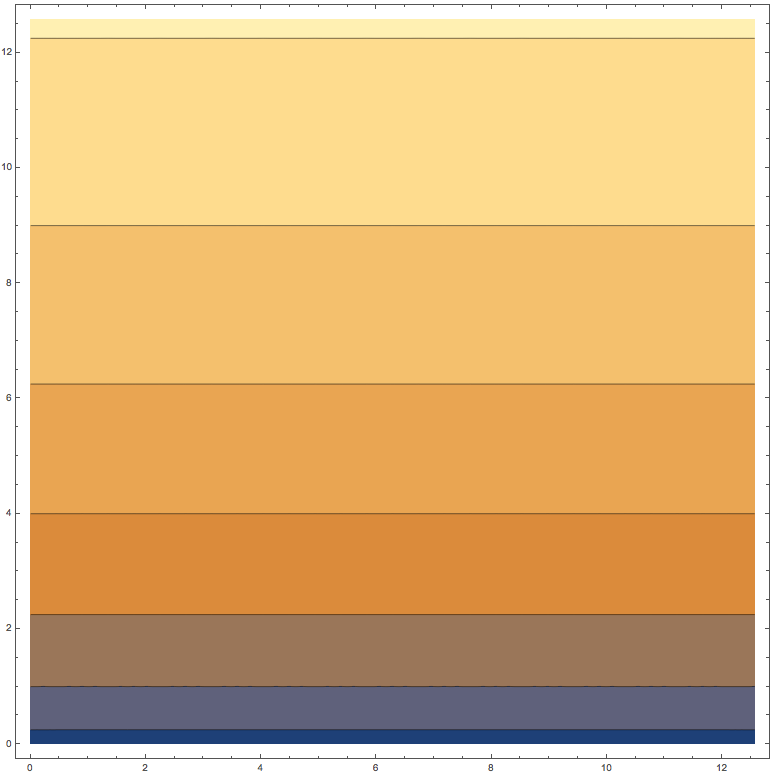}
\caption{ The algebraic property yields a family of KEIGS according to Theorem \ref{alggeothm}.  Here, from the linear system, Eq.~(\ref{slin}), results equivalent KEIGS, $(a_2,x_2), (a_2^2 x_2^2), (\sqrt{a_2},\sqrt{x_2})$, shown left to right, as corresponding contour plots.  Notice that the set of level sets are the same and in this sense we define the eigenfunctions  equivalence, Eqs.~(\ref{inclusion})-(\ref{inclusion2}), as defined according Definition \ref{defnequiv} of primary KEIGS.  Contrast this figure to Fig.~\ref{Fig4}.}
\label{Fig5}
\end{figure}
\end{remark}

Now to the title of this section, that Primary KEIGS complete the geometry, {\color{black} in the sense that the algebraic structure alone does not account for all of the set of level sets, motivates the following example.}

\begin{remark}{\bf Example 1: Non-Primary KEIGS of a Linear System, Despite Same Eigenvalues.}
In the Appendix, Eqs.~(\ref{ceigs}), (\ref{ceigs2}), we showed that the linear system,
\begin{equation}\label{slin}
\dot{x_1}=a_1 x_1, \dot{x_2}=a_2 x_2,
\end{equation}
has a KEIGS in general form $\phi_\lambda (x_1,x_2)=x_2^{\frac{\lambda}{a_2}}h\left(\frac{x_1}{x_2^{(\frac{a_1}{a_2})}}\right),$ from   an initial data curve $\Lambda$ as a horizontal line, and this class includes the observer function, KEIGS, $(a_2, x_2)$ when the initial data is chosen to be a constant, $h(s)=1$.  However, if the initial data on $\Lambda$ is not chosen as a constant, say for example,  as $h(s)=s$, then 
\begin{equation}
    \phi_\lambda (x_1,x_2)=x_1x_2^{\frac{\lambda-a_1}{a_2}},
\end{equation}
results.  Or using the same eigenvalue, $\lambda_2=a_2,$
\begin{equation}
    \phi_{a_2} (x_1,x_2)=x_1x_2^{\frac{a_2-a_1}{a_2}}.
\end{equation}
To see that these are {\bf not} part of the same primary KEIGS, despite the {\bf same} eigenvalues and same initial data curve, but  different data functions are used, we must check that these result in different sets of level sets.  Since the gradient is perpendicular to the level sets, and the perp-gradient points along level sets, we can check that,
\begin{equation}\label{orth}
    \nabla  ( x_1x_2^{\frac{a_2-a_1}{a_2}}) \cdot \nabla_\perp(x_2) =<x_2^{\frac{a_2-a_2}{a_2}},\frac{(a_2-a_1)}{a_2x_1x_2^{\frac{a_1}{a_2}}}>\cdot <1,-0>=x_2^{\frac{a_2-a_1}{a_2}}:\neq 0.
\end{equation}
For example, if the system were such that $a_1=1, a_2=2$, then this dot product is clearly spacially varying, as $\sqrt{x_2}$.  So transversality is generally easy to construct even for simple examples.  See Fig.~\ref{Fig4}.
\begin{figure}[hbtp]
\centering
\includegraphics[scale=0.3]{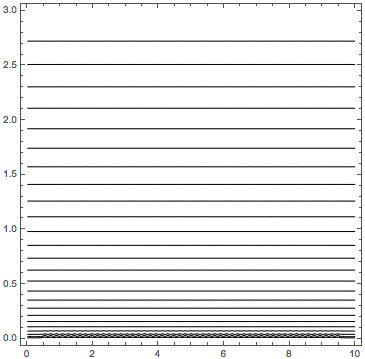}
\includegraphics[scale=0.3]{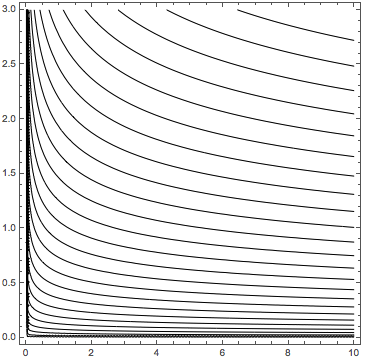}
\includegraphics[scale=0.3]{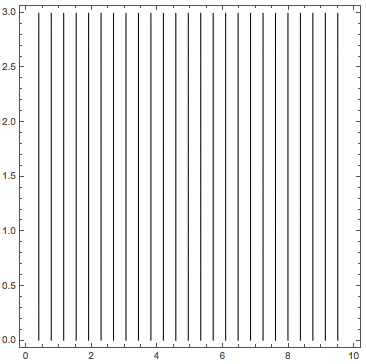}
\includegraphics[scale=0.3]{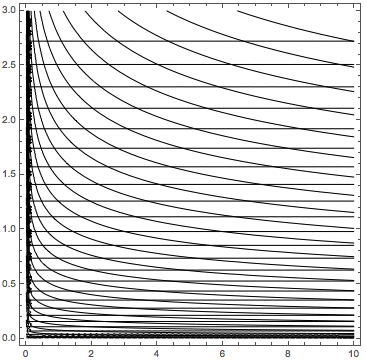}
\caption{ Non-equivalent KEIGS.  Following Example 1, KEIGS due to same eigenvalue and same initial data set $\Lambda$ but different initial data functions, $h, \tilde{h}: \Lambda \rightarrow {\mathbb C}$ result in KEIGS whose level sets are not equivalent, and therefore they generate different primary KEIGS.  Here clearly non-equivalent KEIGS result from using the simple linear system, Eq.~(\ref{slin}), and the same $\Lambda$ is the horizontal line at $x_2$=1, and the same eigenvalue, $\lambda=a_2$, {\it but differing data functions.}  Left uses $h(s)=1$ yielding $\phi_{a_2}(x_1,x_2)=x_2$ (that is the eigenfunction usually discussed, and it is usually called the ``observer" function), but next left to right result from data $h(s)=s^2$, $h(s)=s$ yielding $\phi_{a_2}(x_1,x_2)=x_1x_2^{\frac{a_2-a_1}{a_2}}$, and $\phi_{a_2}(x_1,x_2)=x_1^2x_2^{\frac{a_2-2a_1}{a_2}}$, (and if for example $a_2=2, a_1=1$, then specifically $\phi_{a_2}(x_1,x_2)=x_1\sqrt{x_2}$, and $\phi_{a_2}(x_1,x_2)=x_1^2$).  Observe the differing level sets, as visually we see clearly by this geometry that $\nabla  ( x_1x_2^{\frac{1}{2}}) \cdot \nabla_\perp(x_2) \neq 0$ in Eq.~(\ref{orth}).  The right figure shows the transverse intersections of the first two eigenfunctions on the left that were due to $h(s)=1$ and $h(s)=s^2$.}
\label{Fig4}
\end{figure}

\end{remark}

\begin{remark}{\bf Example 2: Non-Primary KEIGS of a Nonlinear System}

Consider the following ODE in the plane, 
\begin{eqnarray}\label{hopfn}
\dot{x_1}&=&-x_2 + x_1(\mu - x_1^2 - x_2^2) \nonumber \\
\dot{x_2}&=&x_1 + x_2(\mu - x_1^2 - x_2^2),
\end{eqnarray}
which is a well known ODE as it is often used as a normal form to present the unfolding of a Hopf bifurcation, \cite{perko2013differential}. Polar coordinates conveniently follows the change of variables, $x_1=r cos(\theta), x_2=r sin (\theta)$, 
\begin{eqnarray}
\dot{r}&=&r(\mu-r^2), \nonumber \\
\dot{\theta}&=&1.
\end{eqnarray}
The supercritical Hopf bifurcation occurs at $\mu=\mu_c=0$, whereafter, $\mu>\mu_c$ there is a stable limit cycle.  In Fig.~\ref{shoota}, we show a stream plot for many initial conditions in and around the limit cycle, choosing $\mu=1$.  This makes a good case to study the numerics of using the Koopman PDE to find eigenfunctions by a numerical integration method.  We can contrast the numerical solution to closed form computation as this nonlinear problem is still analytically solvable.  By separation of variables, using initial condition 
$r(t_0)=r_0, \theta(t_0)=\theta_0$,
%$t-t_0=(\frac{\log r}{\mu} - \frac{\log r^2-\mu}{2 \mu}})-(\frac{\log r_0}{\mu} - \frac{\log r_0^2-\mu}{2 \mu}})$ 
\begin{equation}
t-t_0=\frac{2 \ln r (\ln r_0^2-\mu) }{ \ln r_0(r^2-\mu)},
\end{equation}
which if choosing, $t_0=0$,  gives, 
\begin{equation}
    r(t)=\frac{e^{\mu t} \sqrt{\mu} r_0}{\sqrt{\mu - r_0^2+e^{2\mu t}r_0^2}}.
\end{equation}

To state an eigenfunction by Theorem \ref{gensolkoop}, we must choose an initial data curve $\Lambda$ and there are many possible choices leading to different domains with of course specific KEIGS depend on the domain. Choosing for example, $\Lambda$ to be a circle of fixed radius centered on the origin, but $R>\sqrt{\mu}$, so larger than the radius of the limit cycle, then the eigenfunction is,
\begin{equation}
\phi_\lambda(r_0,\theta_0)= h(\theta_0+t) e^{\frac{2\lambda \ln R (\ln r_0^2-\mu) }{ \ln r_0(r^2-\mu)}}.  
\end{equation}
Given a cartesian point initial point, $(x,y)$, we compute the corresponding initial $(r_0,\theta_0)$ as usual, $r_0=\sqrt{x^2+y^2}$, $\theta_0=\tilde{tan^{-1}}(x,y)$ (using the often used 4-quadrant aware arctan2  function which we write as $\tilde{tan^{-1}}$ rather than the ``common" $tan^{-1}$ function), and so,
\begin{equation}
\phi_\lambda(x_1,x_2)= h(\tilde{tan^{-1}}(x_1,x_2)+t) e^{\frac{2\lambda \ln R (\ln (x_1^2+x_2^2)-\mu) }{ \ln \sqrt{x_1^2+x_2^2} (x_1^2+x_2^2-\mu)}}.  
\end{equation}

\begin{figure}[htbp]
\centering
\includegraphics[scale=0.2]{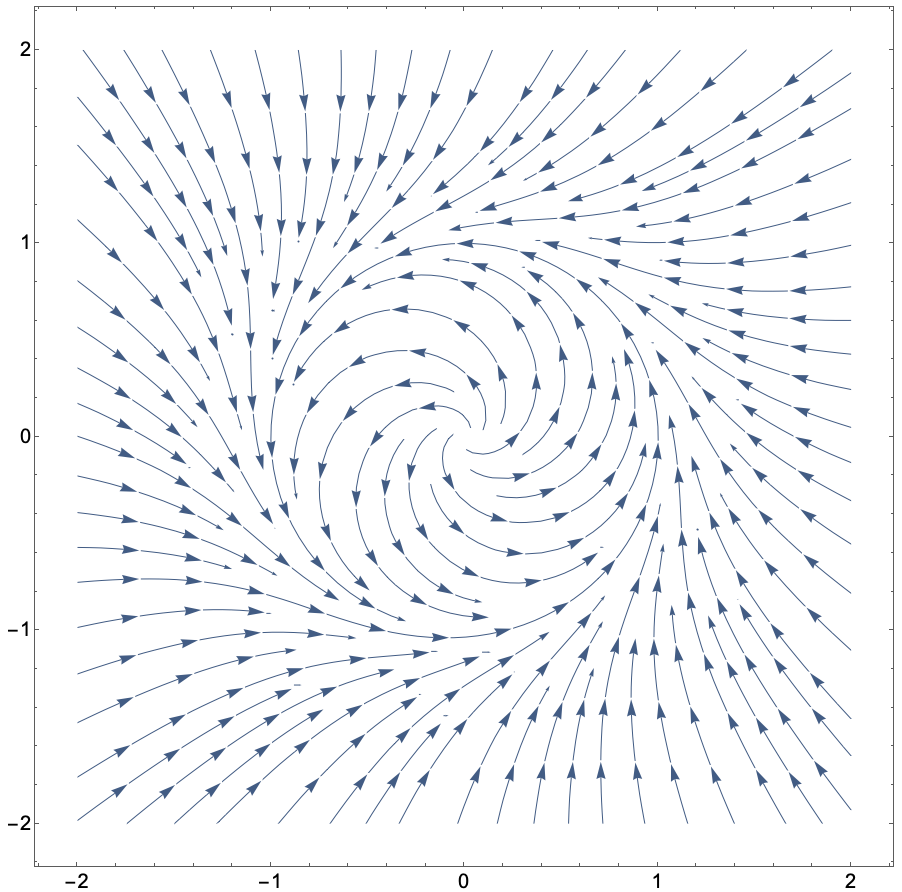}
\caption{{\color{black}Considering the vector field of the normal form of a Hopf bifurcation, Eq.~(\ref{hopfn}), for $\mu=1$ we see a limit cycle at radius $r=1$. A stream plot of several solutions of the flow are shown here and several eigenfunctions are shown in Fig.~\ref{shoot}.}}
\label{shoota}
\end{figure}

\begin{figure}[htbp]
\centering
\includegraphics[scale=0.22]{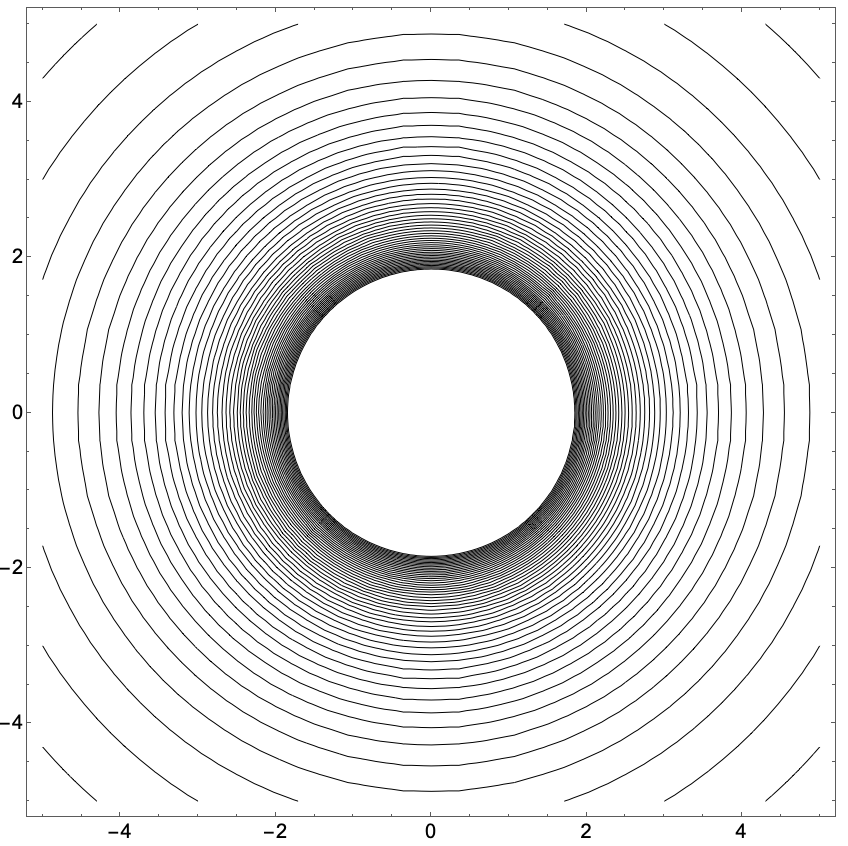}
\includegraphics[scale=0.215]{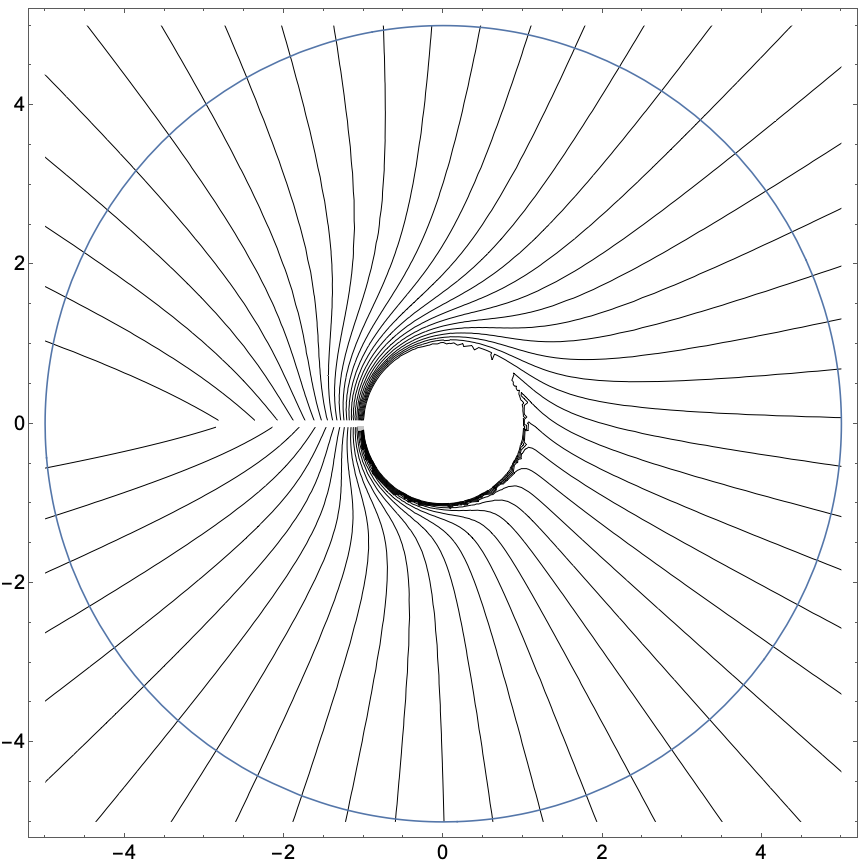}
\includegraphics[scale=0.22]{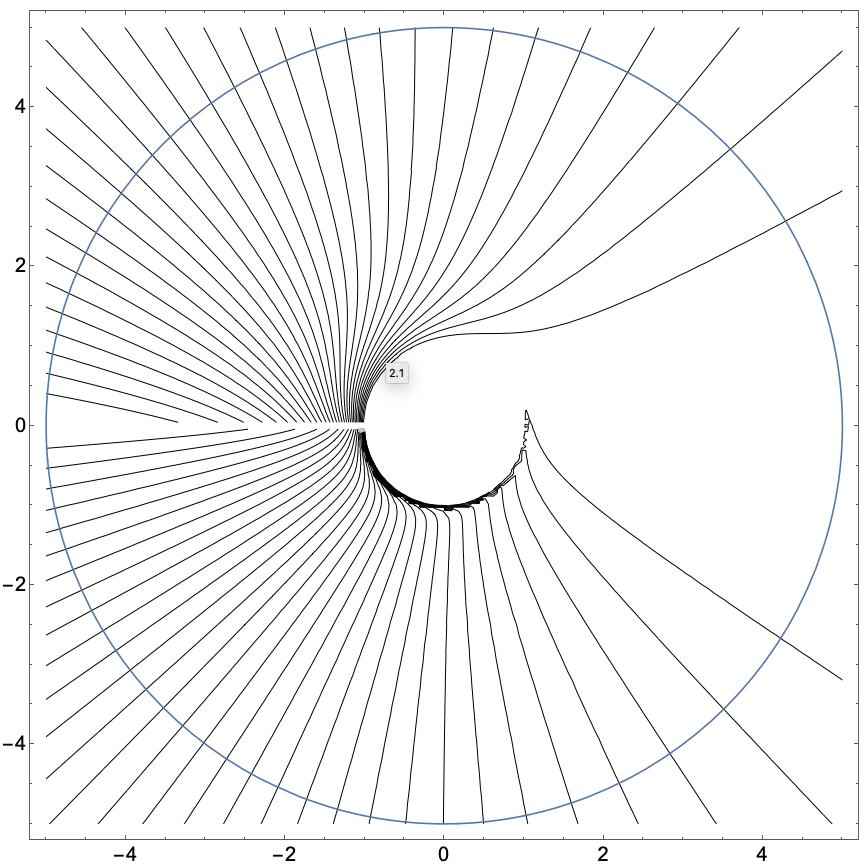}
\includegraphics[scale=0.222]{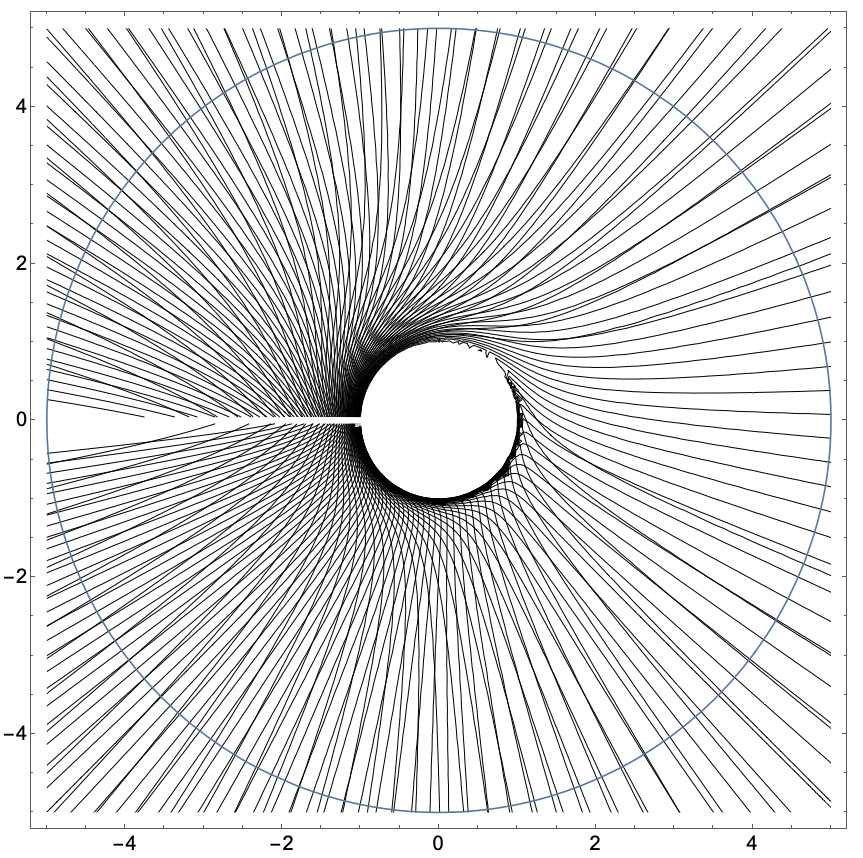}
\caption{{\color{black} Considering the vector field of the normal form of a Hopf bifurcation, Eq.~(\ref{hopfn}), and a stream plot is shown in Fig.~\ref{shoota}. (Upper left) A data curve $\Lambda$ is chosen to be a circle of radius $R=5>1$ (blue curve), and  an eigenfunction $\phi_\lambda$ follows, illustrated by its level sets, and here shown in an open domain $U$ that is the annulus, $1=r<\sqrt{x^2+y^2}$, using initial data $h(s)=1$. (Upper right) The same eigenvalue is chosen, but initial data on $\Lambda$ is defined $h(s)=s$. Notice the level sets are clearly transverse to those of the function shown upper left.  (Lower left) Level sets of $\phi_\lambda$ resulting from data $h(s)=s^2$ on the same circle $R>r$. (Lower right) notice that the level sets resulting from the eigenfunction $\phi_\lambda$ from $h(s)=s$ and $h(s)=s^2$ clearly differ as they cross transversally. This is an illustrative example for Theorem \ref{transversetheorem} and Eqs.~(\ref{funnyPDE})-(\ref{funnyPDE2}).}}
%\label{shoot}
\label{shoot}
\end{figure}

\end{remark}

See Fig.~\ref{shoot}, where it is clear that the level sets of these eigenfunctions, even due to same eigenvalues,  but different initial data $h$, are not equivalent.  So by definition, likewise the primary KEIGS are not equivalent. {\color{black} Summarizing, eigenfunctions depend also on the data curve and also the specific data function on the data curve.  Our full notation is expanded to include these, $\phi_{\lambda, h,\Lambda}(x)$, noting that it is possible that likely there is no equivalence, $\phi_{\lambda, h,\Lambda}(x) \dot{\neq} \phi_{\lambda, \tilde{h},\Lambda}(x)$, even with the same eigenvalue $\lambda$ and even if the same data curve $\Lambda$, but differing data functions, $h\neq \tilde{h}$.  It is however true that some data $h$  on a given data curve $\Lambda$ pulls back to be stated elsewhere in the domain by  other data curves $\tilde{\Lambda}$ pulled back accordingly to a new $\tilde{h}$, already noted at Eq.~(\ref{pullbackcurve}). } %Nonequivalence is not certain, and we give the condition below, Eqs.~(\ref{non1})-(\ref{funnyPDE2}).
%Nonetheless, when convenient and clear we suppress the fuller notation of subindices and write simply, $\phi_\lambda$.

The generality of this observation is straightforward as summarized by the following statement:
{\color{black}
\begin{theorem}\label{transversetheorem}
Given KEIGS with the same eigenvalue $\lambda$, but differing data functions $h,\tilde{h}$ on an otherwise same initial data set, $h,\tilde{h}:\Lambda\rightarrow {\mathbb C}$, we expand the KEIGS notation, $(\lambda, \phi_{\lambda,h,\Lambda}(x))$, and $(\lambda, \phi_{\lambda,\tilde{h},\Lambda}(x))$.  These KEIGS are generally not equivalent in the sense of Definition \ref{defnequiv}, $\phi_{\lambda, h,\Lambda}(x) \dot{\neq} \phi_{\lambda, \tilde{h},\Lambda}(x)$, unless the following geometric statement concerning compatibility of  the data functions $h$ and $\tilde{h}$, stated on $\Lambda$,
\begin{equation}
 \nabla \phi_{\lambda,h,\Lambda}\cdot \nabla_\perp \phi_{\lambda,\tilde{h},\Lambda}=0,
 \end{equation}
 holds.  In a two-dimensional phase space, 
 the data compatibility statement reduces  for  $y\in {\Lambda}\subset U$, to,
 \begin{equation}
 <h(y), \tilde{h}(y)>\cdot <\tilde{h}'(y), -h'(y)>=0.
 \end{equation}
\end{theorem}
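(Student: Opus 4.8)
The plan is to convert the level-set equivalence of Definition \ref{defnequiv} into an analytic transversality condition, and then to evaluate that condition using the closed form of the eigenfunctions supplied by Theorem \ref{gensolkoop}. Two smooth functions with nonvanishing gradients share the same family of level sets precisely when their gradients are everywhere parallel; in the plane this is exactly the vanishing of the scalar $\nabla \phi_{\lambda,h,\Lambda}\cdot \nabla_\perp \phi_{\lambda,\tilde h,\Lambda}$, since $\nabla_\perp$ points along level sets and the dot product measures the failure of the two gradients to align. This delivers the first assertion of the theorem, so the remaining task is to show that this condition collapses, on $\Lambda$, to the stated Wronskian-type relation between the data functions.

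First I would introduce flow-box coordinates $(s,r)$ adapted to the transverse data curve, writing each $x\in U$ as $x=\varrho_r(\Lambda_s)$ so that the inverse maps are exactly the footprint $s^*(x)$ and time-of-flight $r^*(x)$ of Theorem \ref{gensolkoop}. Both eigenfunctions then take the product form $\phi_{\lambda,h,\Lambda}=h(s^*)e^{\lambda r^*}$ and $\phi_{\lambda,\tilde h,\Lambda}=\tilde h(s^*)e^{\lambda r^*}$, whence their gradients expand as $e^{\lambda r^*}\bigl(h'\,\nabla s^* + \lambda h\,\nabla r^*\bigr)$ and $e^{\lambda r^*}\bigl(\tilde h'\,\nabla s^* + \lambda\tilde h\,\nabla r^*\bigr)$, the primes denoting differentiation in the parameter $s$.

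Next I would form the planar wedge (equivalently the perp-dot product) of these two gradients. The $\nabla s^*\wedge\nabla s^*$ and $\nabla r^*\wedge\nabla r^*$ terms vanish, and the surviving cross terms combine to give
\[
\nabla \phi_{\lambda,h,\Lambda}\cdot \nabla_\perp \phi_{\lambda,\tilde h,\Lambda}=\pm\,\lambda\,e^{2\lambda r^*}\,\bigl(\nabla s^*\wedge\nabla r^*\bigr)\,\bigl(h\,\tilde h' - h'\,\tilde h\bigr).
\]
The geometric prefactor $\lambda\,e^{2\lambda r^*}\,(\nabla s^*\wedge\nabla r^*)$ is nonzero: the exponential never vanishes, $\lambda\neq 0$ for a nonconstant eigenfunction, and $\nabla s^*\wedge\nabla r^*\neq 0$ is precisely the statement that the flow-box map is a local diffeomorphism, guaranteed by the transversality of $\Lambda$ to the flow assumed in Theorem \ref{gensolkoop}. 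Hence the perp-dot product vanishes throughout $U$ if and only if the Wronskian $h\,\tilde h' - h'\,\tilde h$ vanishes as a function of $s^*$. Since this Wronskian depends on $s^*$ alone, which is constant along characteristics, it suffices to test it on $\Lambda$, where $r^*=0$ and $s^*$ reduces to the parameterization $s$; there it is exactly $\langle h, \tilde h\rangle \cdot \langle \tilde h', -h'\rangle$, completing the reduction.

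The main obstacle is this reduction step, namely the claim that verifying the identity on the one-dimensional curve $\Lambda$ certifies it on all of $U$. That claim rests on the product structure $\phi=h(s^*)e^{\lambda r^*}$ forcing the obstruction to factor through a function of $s^*$ alone, so that constancy along characteristics carries the $\Lambda$-data throughout the domain. I would also treat the degeneracies cleanly: points where a gradient vanishes, at which ``parallel gradients'' is vacuous and the level-set comparison must be made directly, and the excluded case $\lambda=0$, where the eigenfunction is constant along the flow and equivalence must again be argued without dividing by the prefactor. With those caveats dispatched, the two displayed conditions are equivalent to the primary-KEIGS equivalence $\phi_{\lambda,h,\Lambda}\doteq\phi_{\lambda,\tilde h,\Lambda}$, which is the content of the theorem.
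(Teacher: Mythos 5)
Your proposal is correct and follows essentially the same route as the paper: it uses the closed form $\phi_{\lambda,h,\Lambda}=h\circ s^*\,e^{\lambda r^*}$ from Theorem \ref{gensolkoop}, expands the gradients in the $(s^*,r^*)$ coordinates, and factors the perp-dot product into $\lambda e^{2\lambda r^*}$ times the data Wronskian $h\tilde h'-h'\tilde h$ times the Jacobian-type factor whose nonvanishing is guaranteed by transversality of $\Lambda$ to the flow, exactly as in Eqs.~(\ref{non1})--(\ref{funnyPDE2}). Your added attention to the degenerate cases ($\lambda=0$, vanishing gradients) and the explicit justification for testing the Wronskian only on $\Lambda$ are refinements the paper leaves implicit, but the argument is the same.
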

}

Consider if $\nabla \phi_{\lambda,h}\cdot \nabla_\perp \phi_{\lambda,\tilde{h}}\neq 0$.
Referring to the general solution of a KEIGS for a  chosen initial data function on a data surface, $h(s)$ on $\Lambda$, then by Eq.~(\ref{gensol}), with notation extended to include these relevant information, 
$    \phi_{\lambda, h, \Lambda}(x)=h\circ s^*(x)e^{\lambda r^*(x)}$.  Likewise, for a different data function $\tilde{h}(s)$, but on the same data surface, $    \phi_{\lambda, \tilde{h}, \Lambda}(x)=h\circ s^*(x)e^{\lambda r^*(x)},  $ observe that for a given point $x$ these share the same point $s^*(x)$, and time of flight back along the flow to $\Lambda$, $r^*(x)$ (as defined in Theorem \ref{gensolkoop}). Then consider the orientation of level sets of $    \phi_{\lambda, h, \Lambda}(x)$ relative to those of $    \phi_{\lambda, \tilde{h}, \Lambda}(x)$ at a point $x=(x_1,x_2)$.  {\color{black} We specialize this discussion to two dimensions for simplicity, but the outcome is general and similar to show.} To decide if level sets of $\phi_{\lambda,h,\Lambda}$ are tangent to level sets of $\phi_{\lambda,\tilde{h},\Lambda}$ is described by,
\begin{eqnarray}\label{non1}
  & &  \nabla \phi_{\lambda,h,\Lambda}\cdot \nabla_\perp \phi_{\lambda,\tilde{h},\Lambda}=\lambda e^{2 \lambda r^*(x)}(-\tilde{h}\circ s^*(x) h'\circ s^*(x)+h\circ s^*(x) \tilde{h}'\circ s^*(x))(\frac{\partial s^*(x)}{\partial x_2} \frac{\partial r^*(x)}{\partial x_1}-\frac{\partial s^*(x)}{\partial x_1} \frac{\partial r^*(x)}{\partial x_2})=  \nonumber \\
   &=& \lambda e^{2 \lambda r^*(x)}(<h(y), \tilde{h}(y)>\cdot <\tilde{h}'(y), -h'(y)>)|_{y=s^*(x)}
    (<\frac{\partial r^*(x)}{\partial x_1}, \frac{\partial r^*(x)}{\partial x_2}>\cdot <\frac{\partial s^*(x)}{\partial x_2}, -\frac{\partial s^*(x)}{\partial x_1}>)
    = 0. %\nonumber
\end{eqnarray}
Thus at $y=s^*(x)$, for $x\in U$ but $y\in {\Lambda}$,
 the following data compatibility statements relate to the question of tangency,
\begin{equation}\label{funnyPDE} 
<h(y), \tilde{h}(y)>\cdot <\tilde{h}'(y), -h'(y)>=0 \end{equation}
or the following statement describing how $\Lambda$ is oriented with respect to the flow,
\begin{equation}\label{funnyPDE2}
<\frac{\partial r^*(x)}{\partial x_1}, \frac{\partial r^*(x)}{\partial x_2}>\cdot <\frac{\partial s^*(x)}{\partial x_2}, -\frac{\partial s^*(x)}{\partial x_1}>=0,
\end{equation}
that should be prevented if $\Lambda$ is chosen to be transverse to the flow, then $\phi_{\lambda,h,\Lambda}$ and $\phi_{\lambda,\tilde{h},\Lambda}$ have tangential level sets, despite different data $h$ and $\tilde{h}$. These computations serve to prove Theorem \ref{transversetheorem}. Again, for intuition we refer to Figs.~\ref{Fig4}-\ref{shoot}.

%\begin{comment}
%\begin{remark}
%It is clear that if a set of functions share the same set of level sets, then any linear combination of those functions also shares the same set of level sets; Stated formally, if, 
%\begin{equation}
 %   g\in span(\overline{(\lambda,\phi_\lambda(x)}),
%\end{equation}
%then,
%\begin{equation}
%g\in \overline{(\lambda,\phi_\lambda(x)}.
%\end{equation}
%\end{remark}
%Said conversely, it is not possible to build variation in a direction transverse to a linear combination of a set of equivalent KEIGS.  Likely it is inefficient to build such variation using KEIGS that are almost equivalent in the sense that their level sets are not significantly  transveral but instead perhaps almost tangent.
%Thus we are motivated to identify pairwise primary KEIGS such that not only their level sets are transverse to each other (definitive  of sets of functions forming two different primary KEIGS) but ``decisively" so, as a pre-requisite to building efficient representation of general functions by eigenfunctions, as Definition \ref{efficientr}.   In the next section we will define an optimization based method in the spirit of POD to generate such functions.
%\end{comment}

{\color{black}
\begin{remark}\label{countmore}
We have already noted that the cardinality of KEIGS is at least uncountable due to the algebraic property, but now  we notice that even with the quotient, the cardinality of primary KEIGS may be greater than uncountable due to the cardinality of initial data functions $h$, and the possibility that these may produce distinct primary KEIGS.  Consequently, principal eigenfunctions in the sense of Mohr-Mezic \cite{mohr2016koopman} generate a principal algebra of eigenfunctions and each of these associates with a primary eigenfunction by our Definition \ref{defnequiv}. However, Theorem \ref{transversetheorem} implies that there are primary eigenfunctions that are not generated from principal eigenfunctions.
\end{remark}
}

In order to describe general observation functions as linear combinations of eigenfunctions, and given that primary eigenfunctions functions  summarize geometric aspects of eigenfunctions, we are motivated to identify pairwise primary KEIGS such that not only their level sets are transverse to each other (definitive  of sets of functions forming two different primary KEIGS) but ``decisively" so, as a pre-requisite to building efficient representation of general functions by eigenfunctions, as Definition \ref{efficientr}.   In the next section we will define an optimization based method in the spirit of POD to generate such functions.

%To measure a significant level of transversality, let,
%\begin{equation}
%    \nu(U)=\int_U |\theta (z)| dz. %or maybe \inf_U \theta(z).
%\end{equation}
%\end{proof}
%Take for example the nearly coincident tangents associated with eigenfunctions such as those seen in 
%in Figs.~\ref{Fig4} and \ref{shoot}.  Already noted that the algebraic property alone cannot make enough eigenfunctions to produce variation transverse to the level sets of a primary KEIGS, but even weak transversality is an issue.  Specifically for example, the linear system Eq.~(\{simplestlinear}) in Fig.~\ref{Fig4} with $h(s)=s^n$, on $\Lambda$ is the horizontal curve at $x_2=1$ produces recall Eq.~(\ref{ceigs}) $\phi_\lambda(x_1,x_2)=x_2^{\frac{\lambda}{a_2}}h(\frac{x_1}{x_2^{\frac{a_1}{a_2}}})=x_2^{\frac{\lambda-a_1n}{a_2}} x_1^n, $ which clearly...

%\section{Numerical Methods: Optimal Representation by KEIGS}\label{numericalmethods}

\section{Numerical Methods: optimal Koopman Eigenfunction Extended Dynamic Mode Decomposition (oKEEDMD)}\label{numericalmethods}

We have described above how there are many eigenfunctions, generally at least uncountably many.  However,  many pairings of eigenfunctions present either the same, or almost the same set of level sets, that we define as an equivalence called primary KEIGS.   In \cite{korda2019optimal}, authors speak of having enough eigenfunctions for complete representation of arbitrary functions as linear combinations of eigenfunctions, following a phrase ``richness", as we also use a similar phrasing in \cite{li2017extended}.  But in our work we described how to develop an efficient basis set in terms of a machine learning based ``dictionary" concept that we called EDMD-DL, \cite{li2017extended}.  

In \cite{folkestad2019extended}, the authors have developed a variant of EDMD where the observation functions are themselves Koopman eigenfunctions, that they called
Koopman Eigenfunction Extended Dynamic Mode Decomposition (KEEDMD).
In broad sense this is what we do here, but with the extra technology so as to make for an optimally efficient representation.
Therefore in the spirit of naming DMD methods with acronyms, the method we present here might be called, oKEEDMD, the extra ``o" for optimal.  Also let us point out that our construction of eigenfunctions is very different than any other approach in that we restrict out construction {\color{black} to the co-dimension one manifold $\Lambda$,} to building a (an optimal) data function $h$ on a surface $\Lambda$ that is transverse to the flow.

Now in this paper we are positioned to approach a simple problem for practical applications, which is to ask how few eigenfunctions, optimally how few, can we use for a quality of estimate. Here we present a method to construct empirical eigenfunctions to solve a problem of efficient representation, and in some sense this problem reminds us of the celebrated POD-KL (Principal Orthogonal Mode - Karhunen-Loeve) concept that is popular in spatiotemporal data driven methods for constructing time-averaged optimal modes often from PDE data, \cite{karhunen1947under, loeve1977elementary, webber1997karhunen, holmes2012turbulence, watanabe1969knowing}.  There are distinct differences, most notably our representation is in terms of Koopman eigenfunctions.  Nonetheless, what is key in our construction is that while there may well be at least uncountably many eigenfunctions, we have {\it }freedom to choose those we like. So we assert an efficiency principle to choose good ones, and thus follows the title of the paper regarding a good dictionary.  See Definition \ref{efficientr}.  {\color{black}In representing a general observation by series of eigenfunctions, we are approximately decomposing it into standing modes, and by doing so efficiently, our goal is to use relatively fewer such modes if possible.}

In \cite{korda2019optimal}, the authors present a theorem  that asserts that an arbitrary continuous functions can be represented as linear combinations of eigenfunctions.  At the heart of the proof of their theorem is the idea (we now describe using our notation here) that eigenfunctions in a nonrecurrent domain $\Omega$ may be constructed on a transverse data surface $\Lambda\subset \Omega$, and a general (not necessarily eigen) function $q:\Omega \rightarrow {\mathbb C}$, that  pulls back to $\Lambda$ where careful choice of the data $h$ allows us optimal efficiency.

\begin{figure}[h]
\centering
\includegraphics[scale=1]{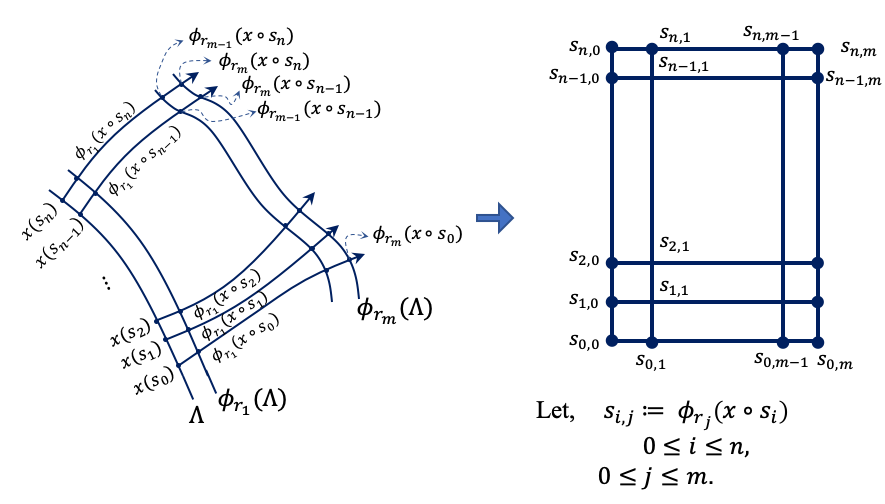}
\caption{Numerical optimization on a gridded sample of the optimal data function $h:\Lambda \rightarrow {\mathbb C}$ in terms of Eq.~(\ref{optvarphi}) proceeds on a grid, propagated by the flow to sweep out the domain of fitting, {\color{black} $U=\{\varrho_t (x), x\in \Lambda, t\in [t_1,t_2]\}$}, which by Eqs.~(\ref{grid1})-(\ref{grid2}) leads to a least squares problem for $h$ on an equal time grid (right), interpreted in the phase space (left).  Repeated application of the least squares problem Eq.~(\ref{opt1}) leads to a basis of successively optimal eigenfunctions, Eq.~(\ref{opt2}).}
\label{Figq}
\end{figure}

Now we will address efficiency of a Koopman eigenfunction representation.
 In the context of the geometric aspects described in earlier sections,  many pairs of eigenfunctions may not be significantly distinguished from each other in the sense that their level sets may either coincide or may be almost not transverse, so general representations may not be efficient.
So we pose the efficient optimal representation by eigenfunctions problem as follows.
Given an $L^2(U)$ function $q:U \rightarrow {\mathbb C}$, let,
\begin{equation}\label{optvarphi}
 \varphi=  arg\min_{\lambda, h} \ \|\phi_{\lambda, h}-q\|_{L^2(U)}^2
\end{equation}
be the eigenfunction $\varphi:U\rightarrow {\mathbb C}$ with eigenvalue $\lambda$ that most closely estimates $q\in L^2(U)$. The domain stated $U=\{\varrho_t (x), x\in \Lambda, t\in [t_1,t_2]\}$, and for presentation here, let $t_1=0$, but as long as $0 \in [t_1,t_2]$ the construction proceeds similarly.
So that we may refer to Theorem \ref{gensolkoop}, we will describe this problem in the Sobolov space $\phi_{\lambda, h}\in H^2(U)$.  Notice that the optimization is over eigenvalues $\lambda$ and data functions $h$, on some specific data curve $\Lambda$, so this is quite different from a typical eigenfunction optimization problem that may directly attempt to construct the eigenfunction $\phi_{\lambda, h}$.  The argmin will sufficiently produce   just a data function $h$ (and corresponding $\lambda$) and so that is co-dimension-one (lower dimensional) optimization problem as $\Lambda\in U$. 

Recall that this minimization problem coincides with maximal optimal projection as follows,
\begin{equation}
    arg\max_{\lambda, h} \ (q,\phi_{\lambda, h})= arg\min_{\lambda, h}  \ \|\phi_{\lambda, h}-q\|_{L^2(U)}^2,
\end{equation}
using norm and inner product notation as usual,
\begin{equation}
    \|f_1\|_{L^2(U)}=\int_U f_1(x) dx, \mbox{ and, }(f_1,f_2)=\int_U f_1(x) f_2(x) dx, \mbox{  for any $f_1,f_2\in L^2(U)$}.
\end{equation}
%In fact, recall the general statement is that if $H$ is an inner product space, and $A=space(v_1,..,v_m)\subset H$ is a subspace, then if $u\in H$, and $v\in A$, then if the linear combination is written, $u=\sum_{i=1}^m
See Fig.~\ref{Figq}.

We proceed to interpret and approximate this optimization problem to develop optimal KEIGS (oKEIGS) based on  interpreting Theorem \ref{gensolkoop}, Eq.~(\ref{gensol}), beginning with a grid on the data surface $\Lambda$ propagating through the domain $U$.  This leads to a least squares problem as follows.  Let $s_0<s_1<...<s_n$ be a uniform partition of a data curve $\Lambda$. (In a multivariate setting a general grid will comparably lead to a linear inverse problem, but we proceed to describe the idea in this simpler case that $\Lambda$ is one-dimensional).  Data $h:\Lambda \rightarrow {\mathbb C}$ is likewise partitioned and indexed, 
\begin{equation}\label{grid1}
h_i:=h(s_i).
\end{equation}
Over a uniform  grid in time, $r_0=0<r_1<...<r_m$,
let,
\begin{equation}\label{thearray}
S_{i,j}:=\varrho_{r_j}\circ x(s_i), \mbox{ } 0\leq i \leq n, \mbox{ } 0\leq  j \leq m,
\end{equation}
in terms of the coordinate function into the domain $x(s):\Lambda \rightarrow U\subset M$.  This propagates by the flow $\varrho_r(\Lambda)$ for the gridded sample in space, and time, as shown in Fig.~\ref{Figq} indexed  uniformly on time  level of  $s^*(x)$ starting from $\Lambda$, (defined in Eq.~(\ref{sstar}) ), that are in turn level sets in $h$.  We have not discussed convergence of the discrete computational method here to the exact solution of the generator PDE, but it may be considered straightforward work of numerical analysis as we have restricted the Koopman operator dynamics to the composition operator for observation functions relative to Lebesgue measure,  $g\in L^2(M)$.

In terms of the grid, the optimization problem Eq.~(\ref{optvarphi}), for the function $\varphi(x)$ represented on the grid $\varphi \circ x(s_{i,j})$ is approximated by solving the finite rank least squares problem, 
\begin{equation}
     \tilde{\varphi}=arg\min_h \|\phi_{\lambda, h}\circ x(s_{i,j})-q\circ x(s_{i,j})\|_F^2=arg\min_{h_i} \sum_{j=1}^m \sum_{i=1}^n |e^{\lambda r_j}h_i-q_{i,j}|.
\end{equation}
The tilde, $``\mbox{ }\tilde{ }\mbox{ }"$ describes that the vector is an array representing the function on the grid $S_{i,j}$ at points Eq.~(\ref{thearray}) shown in Fig.~\ref{Figq}, and 
\begin{equation}
\tilde{\varphi}_{\lambda, h}\approx  \varphi_{\lambda, h}
\end{equation}
on that grid.
Stating both functions $q$ and $\tilde{\varphi}_{\lambda, h}$ are discretely sampled at 
\begin{equation}\label{grid2}
\varrho_{r_j}\circ x(s_i), \mbox{ } 0\leq i \leq n, 0\leq   m,  \mbox{ }
\mbox{ }q_{i,j}:=q(S_{i,j}).
\end{equation}
The usual Frobenius norm for arrays, $\|v\|_F^2=\sum_{i,j} v_{i,j}^2$ is used. The problem becomes more convenient when the arrays are reshaped.   To this end, we write the optimal initial data as an $n\times 1$ vector $h^o(\lambda)\in {\mathbb C}^n$ that solves a classical least squares problem,
\begin{equation}\label{opt1}
    h^o(\lambda)=arg\min_h \|A(\lambda) h- b\|_2,
\end{equation}
where,
\begin{equation}\label{opt2}
    A(\lambda)=E(\lambda) \otimes I_n, \mbox{ where, }b=reshape(q,mn,1),%Rq, 
\end{equation}
and $E(\lambda)$ is an $m\times 1$ vector, 
\begin{equation}\label{opt3}
E(\lambda)=[e^{\lambda r_0} e^{\lambda r_1} ... e^{\lambda r_m}]^t \in {\mathbb C}^m, 
\end{equation}
and $I_n$ is the $n\times n$ identity matrix.  With $\otimes$ as the Kronecker product, this makes $A(\lambda)$ an $mn \times n$ matrix consisting of the $E$ vector repeated $n$ times.  Further, $b$ is an $mn \times 1$ data vector describing the $n \times m$ array for the grid sample of the data function $q(S_{i,j})$.  The eigenfunction approximation, but reshaped as an $mn\times 1$ vector, follows the the least squares solution, $h^o(\lambda)$,
\begin{equation}
     p^o(\lambda)=A(\lambda)  h^o(\lambda),
\end{equation}
 to be reshaped to the domain, as an $m\times n$ array of estimated function values, $\tilde{\varphi}^o(\lambda)$.
Notice the arbitrary flexibility in choosing $\lambda$. As such we have written each of the expressions in Eqs.~(\ref{opt1})-(\ref{opt3}) as functions of $\lambda$.
  Any $\lambda$ will lead to an optimal vector $h^o(\lambda)$ and thus $\tilde{\varphi}^o(\lambda)$.  This reflects the same flexibility already noted, but now in other words.  In fact, for a given $q$.
  
Finally, we summarize,
\begin{eqnarray}\label{opt4}
    (\lambda^o_1,h^o_1) &=& arg\min_{\lambda, h} \|A(\lambda) h- b\|_2, \nonumber \\
    p_1^o&=&A(\lambda^o_1)  h^o_1, \nonumber \\
    \tilde{\varphi}_1^o&=& reshape(p_1^o,n,m).
\end{eqnarray}
Notice the subindex $``1"$ of $\tilde{\varphi}_1^o$ is placed to describe that further optimization may proceed as below to successively reduce the residuals.

\begin{figure}[htbp]
\centering
\includegraphics[scale=0.37]{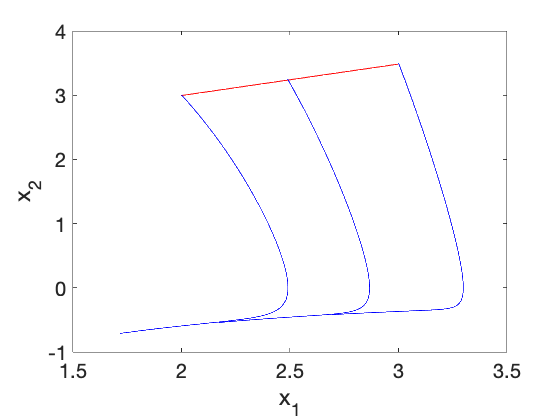}
\includegraphics[scale=0.37]{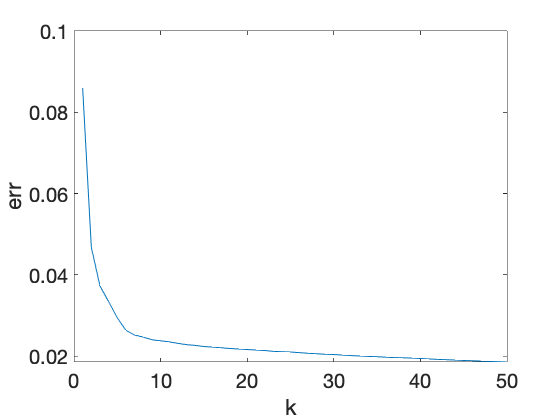}
\includegraphics[scale=0.24]{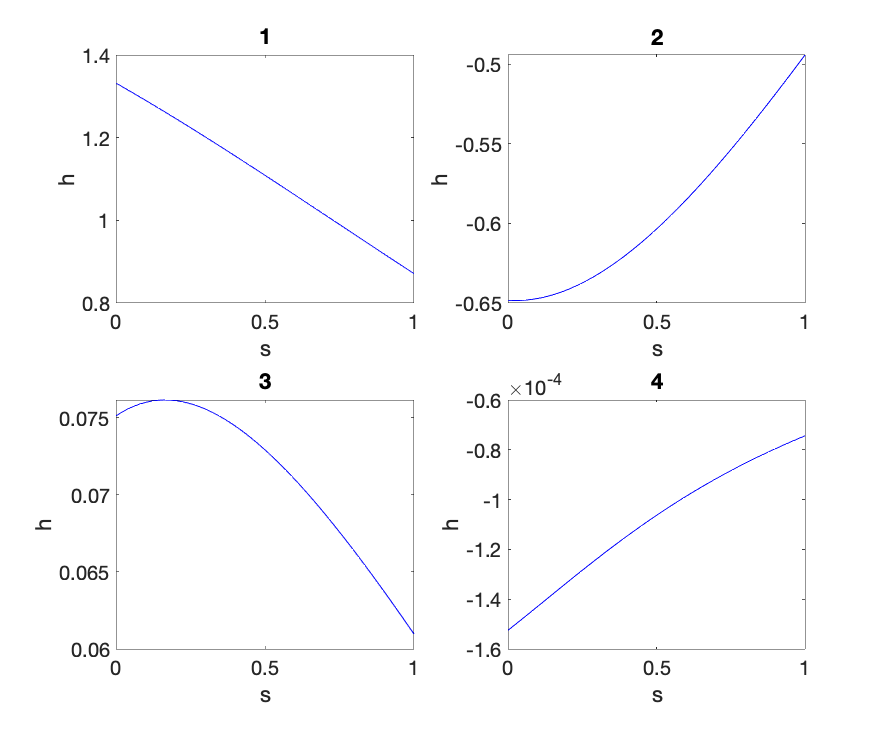}
\includegraphics[scale=0.23]{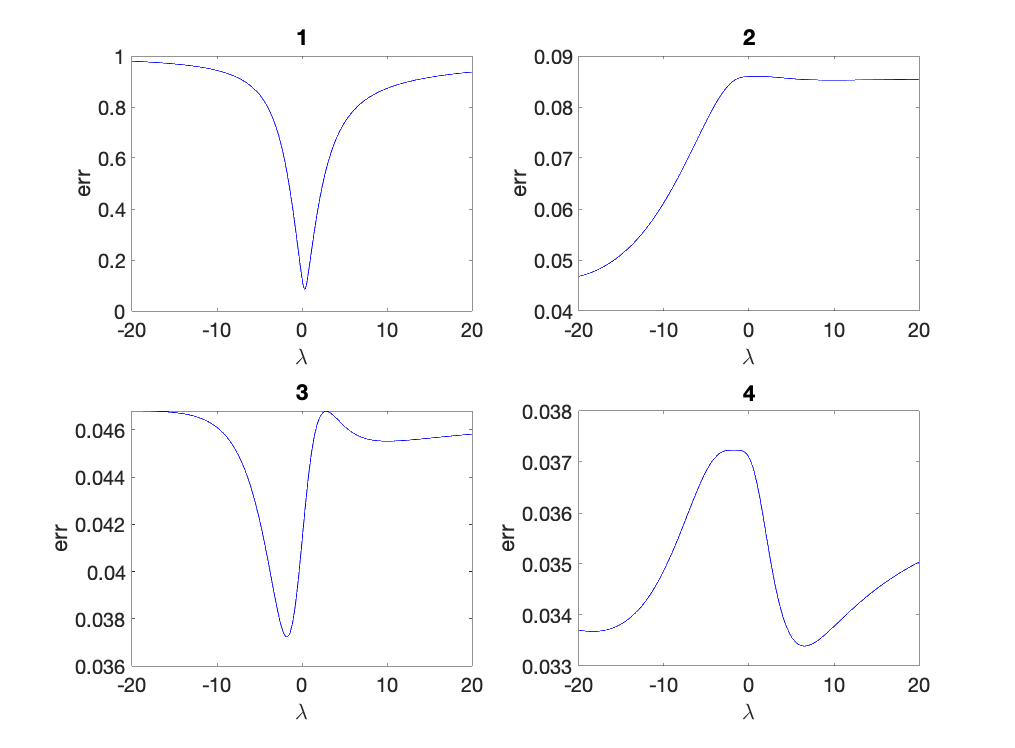}
\caption{oKEEDMD construction for the Van der Pol equation, Eq.~(\ref{vander}). (Upper Left) {\color{black} The red data curve $\Lambda$ 
evolves by the flow to sweep out domain $U$ shown between the blue curves, (and a blue curve through the middle is shown for reference).  All subsequent figures consider the specific target observable function Eq.~(\ref{thisdata}), in this domain.  } (Upper Right) Optimal residual as a function of number of terms fitted, $k$.  {\color{black} (Lower Left)  {\color{black} The first $4$ optimally fitted data functions $h$ on the (red) manifold $\Lambda$ for target function $q(x)$ in the domain shown. }}  (Lower Right) The range of errors when fitting the function, and its residuals, shown as a function of $\lambda$, the optimal eigenvalue successively being the minimal value on each curve.  }
\label{Figq2}
\end{figure}

Continuing in the theme of estimation, we can successively fit the residuals,
\begin{eqnarray}\label{opt2}
    R_k&=&b- \sum_{l=1}^{k} p^o_l, \nonumber \\
  (\lambda^o_{k+1},h^o_{k+1}) &=& arg\min_{\lambda, h} \|A(\lambda) h- R_k\|_2, \nonumber \\
    \tilde{\varphi}_{k+1}^o&=& reshape(p_k^o,n,m).
  \end{eqnarray}
It is convenient to normalize each of these eigenfunctions,
\begin{equation}
        \overline{\varphi}_k^o=\frac{\tilde{\varphi}_k^o}{c_k},       c_k=\|\tilde{\varphi}_k^o\|_2,
\end{equation}
but there is no reason to assume these form an orthogonal set of eigenfunctions.
Then follows the statenent of optimal estimation by these empirical oKEIGS, 
\begin{equation}\label{opt222}
    q=\sum_{l=1}^k c_k \overline{\varphi}_k^o + R_{k+1}.
\end{equation}
Coefficients $c_k$ describe the importance of the dynamic pattern of each of these oKEIGS.  {\color{black} At each stage, fitting the residual, we successively search for best eigenfunctions, and each eigenfunctions  is independently associated with a data function on $\Lambda$.}

 \paragraph{{\color{black}Example: oKEEDMD for the Van der Pol oscillator.}}
See Fig.~\ref{Figq2} showing example optimal solutions of a specific example problem, {\color{black} and specifically including the resulting data functions $h$.} Let,
\begin{equation}\label{thisdata}
q(x)=3e^{\frac{-(x_1^2+x_2^2)}{10}},
\end{equation}
$q:{\mathbb R}^2\rightarrow {\mathbb R}$, chosen to challenge the method since it is not an eigenfunction, and also for an interesting shape.  We will construct an approximation of this general function in terms of eigenfunctions from a Van der Pol equation, \cite{van1926lxxxviii}, 
\begin{equation}\label{vander}
\dot{x}_1=x_2, \dot{x}_2=x_2(1-x_1^2)-x_1,
\end{equation}
 akin to the the Hopf system already studied above, Eq.~(\ref{hopfn}).  In Fig.~\ref{Figq2}a we show a line segment in red, that denotes a segment of an initial data curve $\Lambda$, and over a time interval $0\leq t\leq 2$ this curve sweeps out the fitting domain $U$ shown outlined in blue, and we also showed a curve through the middle for reference, $U\subset M={\mathbb R}^2$.
The specific domain, as well as the specific function $q$ to be fitted, and the specific dynamical system are all relevant to the specific optimal solution.

Continuing with the example, see Fig.~\ref{Figq2}b, showing the successively optimally decreasing residual error.  We see a distinct ``elbow" which is a commonly used criterion in reduced order modelling.  So we suggest here, that using just $4$ terms performs quite well.  Figs.~\ref{Figq2}c-d show the resulting fitted $h_i^o:\Lambda\rightarrow {\mathbb R}$ functions which then as above defines the eigenfunctions $\varphi_i^o:U\subset M\rightarrow {\mathbb R}$.  {\color{black} Each best $h$ at each stage is different,  and likewise each $\lambda$, successively considering the remaining residual $R_k$ Eqs.~(\ref{opt2}), (\ref{opt222}) through stages of the algorithm.}
Finally in Figs.~\ref{Figq2}d, we show a sweep of real valued eigenvalue candidates for $\lambda_i$, $i=1, 2, 3, 4$.  The best fitted eigenvalue is chosen and correspondingly the $h_i^0$ and $\varphi_i^o$.
By design, $\|R_k\|\downarrow $ faster for these KEIGS than when using any of the other of the infinitely many available eigenfunctions, and the hope is that $\|R_k\|\rightarrow 0$ as $k\rightarrow \infty$.   {\color{black} Compared to fitting an observable by other eigenfunctions, including as the KEEDMD \cite{folkestad2019extended}, we interpret in Fig.~\ref{Figq2}(Lower Right) that there can remain significant variability of quality of fit, that our oKEEDMD exploits.}

\section{Conclusion}

We have described  how the Koopman operator of a flow in an nonrecurrent domain leads to infinitely many eigenfunctions, for each eigenvalue, and that all eigenvalues are legitimate.  This means that the point spectrum is the full complex plane, but interestingly the spectral decomposition into point and complex spectrum in fact rely on the phase space domain.  Furthermore, there  at least an uncountable multiplicity of eigenfunctions for each eigenvalue.  This all follows the idea that arbitrary functions can be formed on a transverse to the flow data set (curve), and the method of characteristics forms the eigenfunction in the rest of the domain.  Even with a simple equivalence class notion that we define here to cope with the algebraic structure of eigenfunctions, whereby we define equivalence of those functions with matched sets of level sets, stating the quotient as a ``primary eigenfunction", still there are uncountably many primary eigenfunctions, thus even despite the algebraic structure of eigenfunctions.  Then we showed that there are simple geometric considerations to  differentiate primary eigenfunctions, when the goal is representation of general functions in terms of linear combinations of primary functions.  Thus follows a new kind of empirical scheme to produce efficient functions, that we call a good dictionary.  In the spirit of naming algorithms for empirical spectral analysis of the Koopman operator as DMD-like methods, here we call ours oKEEDMD, for ``optimal KEEDMD" where EDMD means empirical dynamic mode decomposition for the idea of using basis functions and KE is the new prefix from \cite{folkestad2019extended} for Koopman eigenfunction, meaning those basis functions are themselves chosen to be  eigenfunctions. {\color{black} In summary in main points, we 1) illustrate that eigenfunctions follow a quasi-linear PDE, which is explicitly solvable by the classical method of characteristics, that leads to an understanding of a cardinality of eigenfunctions, for each eigenvalue, that is greater than uncountable. 2) We introduce the concept of primary eigenfunctions as an equivalence class amongst all those functions whose set of level sets match, and this accounts for the principle algebraic structure, and more. 3) Still there are geometric distinguishing features that allow for infinitely many primary eigenfunctions. 4) Then, exploiting the richness available with so many primary eigenfunctions, we present a method called oKEEDMD to efficiently construct series of eigenfunctions to empirically fit a given observation.}

Our construction of a good dictionary is unique from all other constructions since respecting that all eigenfunctions of a nonrecurrent domain are definable entirely on a co-dimension one subdomain that is transverse to the flow, then it is a matter of picking a good data function.  So with this in mind we assert a notion of efficient representation that relies just on that basis function but whose argument is only the data function defined only on the co-dimension one set.  Thus follows oKEEDMD.  
We hope that this idea of building eigenbasis functions for reduced order modelling may well find applications beyond the Koopman operator setting where this work was formulated.

\section{Acknowledgements}
The author received funding from the Army Research Office (N68164-EG) and also DARPA.  The author thanks Felix Dietrich for helpful feedback, and also the very thorough and insightful comments by the anonymous referees.

\section{Appendix: The Method of Characteristics for the Koopman Infinitesimal Generator PDE}\label{characteristics}

Here we recall the classic method of characteristics  for quasi-linear PDEs and and this is important to us here since this   includes the Koopman PDE, Eq.~(\ref{kooppde}) as a member of this class. For instructive practice, will also specialize the approach to solve the Eq.~(\ref{kooppde}) in an especially simple scenario of a linear ODE.
We closely follow the description and notation from the well-regarded textbook by F.~John, \cite{john1975partial}.

A general first order PDE of $u(x,y,...,z)$ may have a general form, $f(x,y,...,u,u_x,u_y,..,u_z)=0,$ but if $f$ can be re-arranged algebraically so that the PDE can be written as, 
\begin{equation}\label{pdechar}
    a(x,y,u)u_x + b(x,y,u) u_y=c(x,y,u),\mbox{ } (x,y)\in \Omega
\end{equation}
then it is called a quasi-linear PDE, and it is amenable to the following solution method.  Our presentation is specialized to two spatial variables, but the concept is more general and similar.  Allowing a solution surface, $z=u(x,y)$ in an extended $(x,y,z)$ space, then the functions $a, b, c$ may be thought of as a vector field. For brevity, we are skipping the extended discussion of the function domains of enough regularity to carry forward the following. Thus there corresponds a function $U$, such that $z=u(x,y)\iff U(x,y,z)=0$, and $\nabla U=<u_x,u_y,-1>$ is normal to the surface, and the PDE Eq.~(\ref{pdechar}), can be written $\nabla U \cdot <a,b,-c>=<u_x,u_y,-1>\cdot<a,b,-c>=0$.  Therefore since perpendicular's to the surface ($\nabla U$) are also perpendicular to the vector field $<a,b,c>$, then $<a,b,c>$ is a vector field that is tangent to the {\it characteristic curves.}  If we parametrize this surface by $r$, then we write an ODE for the curves,
\begin{eqnarray}\label{odeeq}
\frac{dx}{dr}=a(x,y,z), \nonumber \\
\frac{dy}{dr}=b(x,y,z), \nonumber \\
\frac{dz}{dr}=c(x,y,z).
\end{eqnarray}  
We assume  enough regularity for existence and uniqueness, such as the widely used assumption $<a,b,c>\in Lip(M)$, \cite{perko2013differential}. 

Taking Eq.~(\ref{pdechar}) as a Cauchy problem stated with an initial profile on a curve,  
\begin{equation}
    \Lambda=\{(x,y,z)=(f_1(s),f_2(s),h(s))\},
\end{equation}
this reduces a solution to the PDE Eq.~(\ref{pdechar}) as the data function, $h$ on $\Lambda$, as,
\begin{equation}
    h(s)=u(f_1(s),f_2(s)).
\end{equation}
This curve $\Lambda$ should be transverse to the characteristic curves, stated,
\begin{equation}\label{transverse}
    \frac{d}{ds}(f_1(s),f_2(s),h(s))\cdot <a(f_1(s),f_2(s),h(s)),b(f_1(s),f_2(s),h(s)),c(f_1(s),f_2(s),h(s))>\neq 0, \mbox{ for all } s.
\end{equation} 
%See Fig.~\ref{Fig3}.
%\begin{figure}[hbtp]
%\centering
%\includegraphics[scale=0.15]{**%}
%\caption{Hello.  %Eq.~(\ref{transverse}). }
%\label{Fig3}
%\end{figure}
Correspondingly, the ODE statement on characteristics, Eq.~(\ref{odeeq}) is stated as an initial value problem for each $s$.

\begin{remark}{\bf Example: The Observer Eigenfunction of a Linear System.}\label{linearexample}
To demonstrate the method of characteristics for Koopman eigenfunction analysis, first we recall a now well known scenario, which is a linear ode, that produces the so-called ``observer" function as an eigenfunction. 
Consider again the Koopman PDE, Eq.~(\ref{kooppde}), here specifically for a linear flow.  For simplicity of presentation, choose $d=2$ dimensional domain $M$, and choose the underlying ODE Eq.~(\ref{ode}) to be,
\begin{eqnarray}\label{simplestlinear} 
\dot{x_1}=a_1 x_1 \nonumber \\
\dot{x_2}=a_2 x_2.
\end{eqnarray}
Note that otherwise, the  subsequent analysis is largely the same for higher dimensional linear problems, $d>2$.
Then the Koopman PDE is,
\begin{equation}\label{l1}
    a_1 \phi_{\lambda,x_1}+a_2 \phi_{\lambda,x_2}=\lambda \phi_\lambda
\end{equation}
for $\phi_\lambda(x_1,x_2)$ and we have overloaded the notation, that $\phi_{\lambda,x_i}\equiv \frac{\partial \phi_\lambda}{\partial x_i}$.
For specificity we assume that the transverse data curve $\Lambda$ has a specific simple form, 
\begin{equation}\label{specialdata}
    \Lambda=\{(x,y,z)=(s,1,h(s))\},
\end{equation}
which is a horizontal line.  
% \begin{comment}
% See Fig.~\ref{figb}.
% \begin{figure}[htbp]
% \centering
% \includegraphics[scale=0.15]{}
% \caption{Eigenfunction solution by method of characteristics for linear system, Eq.~(\ref{simplestlinear}). Fig.~\ref{pullback} for notation regarding pull back $s^*(x)$ on data manifold $\Lambda$ and pull back time $r^*(x)$ for a general point $x$.  Also see Fig.~\ref{figc}
% \label{figb}
% }
% \end{figure}
% \end{comment}

Then the characteristic curves, Eq.~(\ref{odeeq}) follow,
\begin{eqnarray}
    \frac{dx}{dr}&=&a_1 x, \nonumber \\
    \frac{dy}{dr}&=&a_2 y, \nonumber \\
    \frac{dz}{dr}&=&\lambda z.
\end{eqnarray}
As an initial value problem on $\Lambda$ with data function $h(s)$, we have a  solution,
\begin{equation}
    x=s e^{a_1 r}, y=e^{a_2 r}, z=e^{\lambda r}h(s),
\end{equation}
by multiplying factors.  Eliminating the variable $r$ (parameterization along a characteristic curve, starting at any given parameterization point $s$ on $\Lambda$), 
$\ln y=a_2 r \Rightarrow r=\frac{\ln y}{a_2} \Rightarrow e^{a_1 r}=e^{\frac{a_1}{a_2} \ln y}=y^{\frac{a_1}{a_2}},$
and substituting the original variables, $(x,y,z)=(x_1,x_2,\phi_\lambda(x_1,x_2))$ yields,
\begin{equation}\label{ceigs}
    \phi_\lambda (x_1,x_2)=x_2^{\frac{\lambda}{a_2}}h\left(\frac{x_1}{x_2^{(\frac{a_1}{a_2})}}\right).
\end{equation}

A special simple  case follows the choices of 1) initial data is a constant, $h(s)=1$, on 2) the initial data curve, $\Lambda$ chosen specifically as a ``horizontal line", Eq.~(\ref{specialdata}).  Then we get what has been called the ``{\bf observer function}," when $\lambda_2=a_2$ and thus KEIGS,
\begin{equation}\label{obs1}
    (\lambda_2, \phi_{\lambda_2} (x_1,x_2))=(a_2,x_2).
\end{equation}
This is easily confirmed to be a solution by substitution into the Cauchy problem, Eqs.~(\ref{l1})-(\ref{specialdata}).  An important point is that to get the other ``typically" stated eigenfunction, the observer function in the other variable, we must choose a different data curve, and {\it this time not only must it be transverse to the flow, but it must also be transverse to the data curve,} Eq.~(\ref{specialdata}).  Similarly, this time choosing specifically a vertical line, 
\begin{equation}
\Lambda=\{(x,y,z)=(1,s,h(s))\},
\end{equation}
yields a general solution for general initial function $h$, 
\begin{equation}\label{ceigs2}
\phi_\lambda (x_1,x_2)=x_1^{\frac{\lambda}{a_1}}h\left(\frac{x_2}{x_1^{(\frac{a_2}{a_1})}}\right).
\end{equation}
Likewise this yields the observer function of a constant function $h(s)=1$, and $\lambda_1=a_1$, on the this time vertical data curve, 
\begin{equation}\label{obs2}
(\lambda_1, \phi_{\lambda_1} (x_1,x_2))=(a_1,x_1).
\end{equation}
\end{remark}
If the system is not already diagonal, standard linear theory to diagonalize the system by a similarity transformation of eigenvectors reveals the straightforward and similar results, remembering that conjugate systems share eigenvalues and corresponding eigenfunctions through the homeomorphism as change of variables, \cite{mezic2019spectrum,bollt2018matching}.

\begin{remark}{\bf Example: The Observer Eigenfunction of a One-Dimensional Linear Equation.}
An even simpler case follows the $d=1$-dimensional ode,
\begin{equation}
    \dot{x}=a x,
\end{equation}
where from the Koopman-``pde" is actually the following ode,
\begin{equation}
    a x \frac{d \phi_\lambda}{dx}(x)=\lambda \phi_\lambda(x)=\lambda \phi_\lambda(x).
\end{equation}
In 1-dimension we do not even need to resort to the method of characteristics.
The solution follows by the standard method of ODEs called multiplying factors, using initial data $\phi_0$,
\begin{equation}
    \phi_\lambda(x)=\phi_0 x^{\frac{\lambda}{a}}.
\end{equation}
Notice there has not as much freedom in one-dimension, other than algebraic powers, there is only a constant distinguishing eigenfunctions, but not a complete function of arbitrary data, and eigenfunctions are usually considered as an equivalence class when they simply differ by a constant factor.
\end{remark}

\begin{remark}
Notice that not only is there a solution eigenfunction Eq.~(\ref{ceigs}), one for each $\lambda \in {\mathbb C}$, but for each $\lambda$, there is also the freedom to choose the initial data function $h:\Lambda \rightarrow {\mathbb C}$ arbitrarily.  This is the source of a great deal of nonuniqueness that is only in part equivalent to the usual discussion of uniqueness of the Koopman spectrum associated with the algebraic property of eigenfunctions.
\end{remark}

\begin{proof} of Theorem \ref{thekoopmanpdetheorem}
%We state for a two dimensional autonomous differential equation, but the argument is similar for a $d>2$ dimensional autonomous system. 
Here we state the problem for a $d$-dimensional autonomous ODE.
Nonautonomous systems in $d$-dimensions can be similarly handled, first by the standard methodology of augmenting with a new variable to represent time, to $d+1$-dimensional autonomous system.
Restating that the initial value problem, $\dot{x}=F(x)$, $x(t_0)=x_0$ gives a unique flow $x(t)=\varrho_t(x_0)$ solution for $x_0\in U$, and $x\in U\in M$ is an open set containing the initial value curve, $\Gamma\subset U$.  Then in $U$, $\nabla \phi_\lambda \cdot F=\lambda \phi_\lambda$, has characteristic curves which are solutions of the ODE,
\begin{eqnarray}\label{pf1}
    \frac{dx}{dr}&=&F(x) \nonumber \\
    \frac{dz}{dr}&=& \lambda z,
\end{eqnarray}
following closely the formulation of the method of characteristics as above.
So then,
\begin{eqnarray}
x(r)&=& \varrho_r(x_0(s)) \nonumber \\
z(r)&=& e^{\lambda r}h(s).
\end{eqnarray}
Here we remind that for initial conditions $x_0=x_0(s)$ on the data surface $\Lambda\in U$, and since we write $s$ as the parameterization on $\Lambda$, then the initial value is,
\begin{equation}
\phi_\lambda(x_0)=\phi_\lambda(x_0(s))=h(s).
\end{equation}
Therefore, for an arbitrary $x\in U$, (and $x$ is not necessarily on the initial data curve $\Lambda$), by assumed existence and uniqueness of the flow, $\varrho_t:U\times {\mathbb R}\rightarrow U$, there exists an $r^*(x)\in {\mathbb R}$ such that,
\begin{equation}
    \varrho_{-r^*(x)}(x)\in \Lambda,
\end{equation}
uniquely by assumption of no recurrence and existence by assumption that $\Lambda$ partitions $U$, transversally to the flow, outcome of the assumption, $U=\cup_{t\in [t_1,t_2]} \varrho_t(\Lambda)$.  
%In the case that the flow through $x$ returns repeatedly to $\Lambda$, choose the first such negative value (intersection following ``backward" along the flow, from $x$ back to $\varrho_{-r^*(x)}(x)\in \Lambda$ which is in the domain of the data $h:\Lambda\rightarrow {\mathbb C}$ ).  

%\begin{equation}
%r^*(x)=\min_r \{r: \varrho_{-r(x)}(x)\in \Lambda\}
%\end{equation}
Further, for since each $x\in \Lambda$ has a parametrization, $s(x)$ then let,
\begin{equation}
    s^*(x)=s\circ \varrho_{-r^*(x)}(x).
\end{equation}
Hence,
\begin{equation}\label{pf2}
    \phi_\lambda(x)=h\circ s^*(x) e^{\lambda r^*(x)}.
\end{equation}
That is, from $x\in U$, ``pull"-back along the flow from $x$ to $s^*(x)\in \Lambda$, where initial data $h$ ``is read", and the value of $\phi_\lambda(x)$ at $x$ is therefore $h\circ s^*(x)$, but scaled linearly by the time of the pullback, forward from $s^*(x)\in \Lambda$ back to the original point $x$: $e^{\lambda {r^*(x)}}$.  See Fig.~\ref{pullback}. \end{proof}
It is straightforward check to confirm that the example, Eq.~(\ref{ceigs}), that we derived by direct computation specialized to the linear problem above, can also be derived  by application of the general formula in Theorem \ref{gensolkoop}, Eqs.~(\ref{gensol}), (\ref{pf2}).

%\section{Appendix: On Point Spectrum and Continuous Spectrum}

\section{Appendix: Example, On Point Spectrum and Continuous Spectrum}\label{ctsspect}

Much has been said lately about the importance of including or understanding  the continuous spectrum of Koopman operators with many interesting directions including for computation and in terms of the associated measures, \cite{mezic2019spectrum, korda2018data}, including for toral automorphisms, \cite{govindarajan2019approximation}.  Yet as far as we know, there is not yet been a study regarding the Koopman operator spectrum, at least in the modern data driven oriented literature, that appeals directly to the classical definition of the spectral theory of operators regarding the decomposition into point, continuous, and residual spectrum, as we reviewed Definition \ref{defn2} above.  In so doing the conclusion of this section is to contrast the significantly different results that follow different chosen domains. 

Here we will discuss aspects of the spectrum,
\begin{equation}\label{spectraldecomp}
\sigma({\cal L})=P_\sigma({\cal L})\cup C_\sigma({\cal L})\cup R_\sigma({\cal L}),
\end{equation}
of the Koopman infinitesimal operator ${\cal L}$, Eq.~(\ref{ininitesimal}), as well as of the Koopman operator $K_t$, Eq.~(\ref{koopmandefn}), Definition \ref{koopmandefnD}, and we will do so directly appealing to Definition \ref{defn2}.
As it turns out,  the specific choice of definition of the domain $U\in M, $ of the measurable functions $g:U\rightarrow {\mathbb C}$  involved is crucial to distinguish the nature of the spectral decomposition.

We will appeal to an
 interesting example which was highlighted recently in the work of Mezic, \cite{mezic2019spectrum}, which describes a perfect action-angle system and from this example we will draw our contrasting observations.  First we review: let $(I,\theta)\in M={\cal I}\times S^1$, where ${\cal I}$ is an annulus in the plane, ${\cal I}=[a,b]\in {\mathbb R}^+$ is an interval and $S^1$ is a circle, and so $M$ is a cylinder or alternatively it can be thought of as an annulus as shown in Fig.~\ref{wedgefig}.  As we will see, depending on the domain chosen, there are significant difference, and these are crucially rooted in the concept of nonrecurrence that featured centrally in Theorem \ref{gensolkoop}.

 Let, 
 \begin{equation}\label{actionangleode}
     \dot{I}=0, \mbox{ } \dot{\theta}=I.
 \end{equation}
 From an initial condition, $(I_0, \theta_0)$ clearly the solution is $(I(t),\theta(t))=(I_0,\theta_0+t I_0)$.  We will use this same problem to emphasize the role of domain in contrasting the case of when there is a point spectrum, and/or a continuous spectrum, and also how the cardinality of these belies their names.  
 
 \subsection{Nonrecurrent subdomain}
 
 In a nonrecurrent subdomain $U$ of $M$, Theorem \ref{gensolkoop} holds without modification, and therefore solutions of the KEIGS are easily determined and of the form,  Eq.~(\ref{gensol}).  Such a subdomain may be chosen as any open set that does not circuit a complete circle, e.g. an open ``wedge" $U=(a,b)\times (\alpha_1,\alpha_2) \subsetneq {\cal I}\times S_1$, $0<\alpha_1<\alpha_2<2 \pi$.  Consider for example,
 \begin{equation}
\Lambda=\{(s,\alpha): \alpha_1<\alpha<\alpha_2 \mbox{ is a fixed constant }, a<s<b \mbox{ is a parameter}\},
 \end{equation}is a radial ray in $U\subseteq {\cal I}$. See Fig.~\ref{wedgefig}.  Further assume a data function is given, $h:\Lambda \rightarrow {\mathbb C}$.  Then by Eq.~(\ref{gensol}),
 \begin{equation}
     \phi_{\lambda,h,\Lambda}(I,\theta)=h(I)e^{ \frac{\lambda( \theta - \alpha)}{I}}.
 \end{equation}
 Furthermore, by arguments also leading to Corollary \ref{gensolkoop}  and also Remark \ref{countmore} we know that all $\lambda \in {\mathbb C} $ are eigenvalues (and furthermore all $h$ allow eigenfunctions).  Therefore, in terms of spectral decomposition, we conclude that the point spectrum is the full complex plane, 
 \begin{equation}
P_\sigma({\cal L})={\mathbb C}, \mbox{ but therefore by compliment, the  continuous and residual spectra, }C_\sigma({\cal L})=R_\sigma({\cal L})=\emptyset,
\end{equation}
 completes the spectrum, $\sigma({\cal L})$ as per Eq.~(\ref{spectraldecomp}).
 
\begin{figure}[htbp]
\centering
\includegraphics[scale=1]{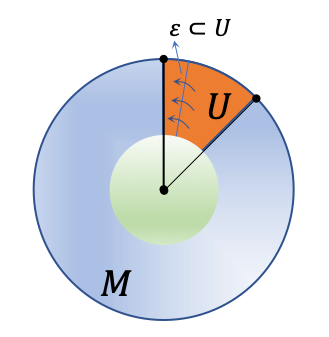}
\caption{ Domain in the annulus $(I\theta)\in M={\cal I}\times S^1$ be chosen as the entire annulus but then the nonrecurrence condition is violated and so Theorem \ref{gensolkoop} does not apply, but however if a wedge  $U=(a,b)\times (\alpha_1,\alpha_2) \subsetneq {\cal I}\times S_1$, $0<\alpha_1<\alpha_2<2 \pi$ is chosen, then the theorem does hold, and therefore a different form of the eigenfunctions hold with significant consequences to the nature of the spectral decomposition.
\label{wedgefig}
}
\end{figure}

 \subsection{Recurrent subdomain}
 
 Now consider the same dynamical system, the action angle flow Eq.~(\ref{actionangleode}), we reconsider the problem, but without the restriction on nonrecurrence.  So now we take $U=M={\cal I}\times S^1$ as the full annulus, as did Mezic in \cite{mezic2019spectrum}, so first we review.  With nonrecurrence, Theorem \ref{gensolkoop} does not hold, and therefore we may not resort to the eigenfunction solution Eq.~(\ref{gensol}) that follows from method of characteristics; that proof breaks down largely because of loss of the concept of a unique solution must be single valued.  However, Mezic  showed directly that since the Koopman operator $K_t[g](I,\theta)=g(I,\theta+It)$, he proved that eigenfunctions do not exist, stated  using the phrase ``the eigenfunction in a proper sense do not exist."  The argument was, depending directly on the idea of a composition operator rather than the infinitesimal generator,  that an eigenfunction would need to satisfy $\phi_\omega(I,\theta)=\phi_\omega(I+\theta+It)=e^{i\omega t}\phi_\omega(I,\theta)$, but allowing constant radius $r=I$ for this system and writing, $\phi_\omega=r^{i \varphi(I,\theta)}$ then also $\phi_\omega(I+\theta+It)=r e^{i\varphi(I,\theta)+iIt}=r e^{i\varphi(I,\theta)+iwt}$, which can only hold if $\omega=I$. The conclusion  is that this  not possible in the sense of a function, so there is no eigenfunction.  But the author goes on to note that,
 \begin{equation}\label{delta}
     \phi(I,\theta)=e^{i\theta}\delta(I-c),
 \end{equation}
where $\delta(x)$ is the dirac delta-``function", acts like an ``eigenfunction" but that is really only true in the distributional sense and not literally as a function.  Therefore he notes that this as an ``eigenmeasure,"  but it is not a function.  That is, weakly one demands that integrating against test functions, $f(I,\theta)$, the composition operator acts as, 
$\int_M K_t \phi(I,\theta)f(I,\theta) dI d\theta=\int_M  \phi(I,\theta+It )f(I,\theta+It) d I d\theta = \int_M  e^{i(\theta + I t)} \delta(I-c)f(I,\theta) dI d\theta= e^{i c t} \int_M \phi(I,\theta) f(I,\theta) dI d\theta.$  In fact, more generally it can be shown that an eigenmeasure can be designed by a delta function supported over any solution, as noted in other contexts in terms of the adjoint operator (Frobenius-Perron), \cite{bollt2013applied,bollt2000controlling,bollt2002manifold}, but in any case these are not functions.

Now re-examining this same issue leading to the eigenfunction from the first principle of referring to the definitions of spectral decomposition, Definition \ref{defn2} and Eq.~(\ref{spectraldecomp}), we demand eigen-like objects that are only allowable actual functions.  This is leads to the idea of ``approximate eigenfunctions" already included as part of spectral theory of operators in the field of functional analysis:

 \begin{definition}{\bf Approximate Eigenvalue.}\label{approxeig_sec}
Under the same assumptions as Definition \ref{defn2}, $b=e^\lambda$ is an  approximate eigenvalue  of a linear operator $K_t$ if there is no $f\neq 0$ such that $K_t f= b^t v$ (thus $b$ is not an element of the point spectrum) but it satisfies  that for any $\epsilon>0$, there exists a function $f_\epsilon:U\rightarrow {\mathbb C}, f_\epsilon \in {\cal F}$, such that
\begin{equation}
    \|K_t f_\epsilon - b^t f_\epsilon\|<\epsilon,
\end{equation}
  \end{definition}
 Furthermore the point is that $b$  must be an element of either the continuous spectrum $C_\sigma(K_t)$ or residual spectrum $R_\sigma(K_t)$, but not the point spectrum.  Therefore to show a nonempty $C_\sigma(K_t)\cup R_\sigma(K_t)$, we need only show existence of an approximate eigenfunction.  In brief, the reason this is sufficient is that an unbounded linear operator is therefore not a continuous operator \cite{boccara1990functional, yosida1994functional}.  
 
 To this end, recall that the action of the ``delta function", while not literally a function but rather a generalized function, it is nonetheless described weakly as the limit of the action of a sequence of real functions.  One such commonly used sequence of functions are the following specific set of indicator functions, $\delta_n:{\mathbb R}\rightarrow {\mathbb R}$, and notice the subindex $n\in {\mathbb Z}^+$,
 \begin{equation}\label{deltan}
     \delta_n(x)=\left( 
     \begin{array}{ccc}
     n & \mbox{ } & \frac{-1}{2n}<x<\frac{1}{2n} \\
     0 & \mbox{ } & \mbox{else}
     \end{array}\right)
 \end{equation}
 Likewise, a sequence indicator functions can be defined on a complex domain, or otherwise, many other smooth functions are also often used, notably radial basis functions, to build the action of the delta function as a sequence of actual functions.
 So with $\delta_n$ and mirroring Eq.~(\ref{delta}), consider a (candidate) approximate eigenfunction,
 \begin{equation}\label{approxeig}
      \phi_{\omega,n}(I,\theta)=e^{i\theta}\delta_n(I-\omega).
 \end{equation}
 Then, 
 \begin{eqnarray}
\|K_t \phi_{\omega,n}(I,\theta) - e^{i \omega t} \phi_{\omega,n}(I,\theta) \|_{L^2(M)} &=& 
\|\phi_{\omega,n}(I,\theta+It) - e^{i \omega t} \phi_{\omega,n}(I,\theta) \|_{L^2(M)}   \nonumber \\
&=& \|\phi_{\omega,n}(I,\theta+It) - e^{i \omega t} \phi_{\omega,n}(I,\theta) \|_{L^2(M)}
\nonumber \\ 
&=& \|  e^{i n\theta} (e^{iIt}   -   e^{i \omega t}) \delta_n (I-\omega)\|_{L^2(M)} \nonumber \\ 
&=& \|  e^{i n\theta} (e^{iIt}   -   e^{i \omega t}){\mathbf 1}_{(\frac{-1}{2n},\frac{1}{2n})}(I-\omega) \delta_n (I-\omega)\|_{L^2(M)} \nonumber \\ 
&\leq & \|  e^{i n\theta}\|_{L^2(M)} \| (e^{iIt}   -   e^{i \omega t}) {\mathbf 1}_{(\frac{-1}{2n},\frac{1}{2n})}(I-\omega)  \|_{L^2( M)}    \|  \delta_n (I-\omega)  \|_{L^2(M)} \nonumber \\
&= & \|  e^{i n\theta}\|_{L^2(M)} \| (e^{iIt}   -   e^{i \omega t})\|_{L^2( [\frac{-1}{2n},\frac{1}{2n}] \times S^1)}    \|  \delta_n (I-\omega)  \|_{L^2(M)} \nonumber \\
&=& \| (e^{iIt}   -   e^{i \omega t})\|_{L^2( [\frac{-1}{2n},\frac{1}{2n}] \times S^1)}     \nonumber \\ 
 &\approx & \| \mathcal{O} (I-\omega)\|_{L^2( [\frac{-1}{2n},\frac{1}{2n}] \times S^1)} \nonumber \\
 &=&\epsilon(n)
 \end{eqnarray}
 The third line follows substitution of the assumed form, Eq.~(\ref{approxeig}).
 The step to the fourth  line of the string of equalities  takes note that the chosen $\phi_{\omega,n}$ includes an indicator function, as $\delta_n$ in  Eq.~(\ref{deltan}) is zero outside the domain $ (\frac{-1}{2n},\frac{1}{2n})$.
 The seventh line simply notes that, $\|  e^{i n\theta}\|_{L^2(M)} =  \|  \delta_n (I-\omega)  \|_{L^2(M)}=1$.
 The second to last line follows a Taylor series of the exponential, and then finally the last line follows by defining, 
$ \epsilon(n)\sim \frac{c}{n},
$ for a constant $c$.
 Thus we conclude that $\delta_n$ are approximate eigenfunctions, and since approximate eigenfunctions exist, then $C_\sigma(K_t)\cup R_\sigma(K_t)$ is nonempty.  Furthermore this construction is general and leads to $C_\sigma(K_t)\cup R_\sigma(K_t)={\mathbb C}$, and from the material reviewed above $P_\sigma(K_t)=\emptyset$.
 
 We summarize: Contrasting the result in this subsection to the previous subsection, the same system the Eq.~(\ref{actionangleode}) in action-angle form, leads to essentially opposite outcomes depending on the domain chosen.   Either a simple system of all point spectrum and empty continuous union residual spectrum if a nonrecurrent subdomain is chosen, or otherwise an empty point spectrum but full continuous union residual spectrum.  For other reasons not discussed here we expect the continuous spectrum alone is full and this dovetails with much of the discussion regarding integration against an absolutely continuous measure in \cite{govindarajan2019approximation, korda2018data}.

\section{Appendix: Proof}\label{pflin}

Here we state a proof of Proposition \ref{algebraicprop}.  This is a result  which is well known results, \cite{budivsic2012applied}, but we include this proof in part because we have not seen this proof that appeals directly to the linear PDE, and we find it interesting.

\begin{proof}
Let $(\lambda_1,\phi_1)$ and $(\lambda_2,\phi_2)$ each be KEIGS. By Eq.~(\ref{eq:koopman definition}), let $b_1=e^{\lambda_1}$, and $b_2=e^{\lambda_2}$.  Each of these satisfies  Eq.~(\ref{kooppde}), $F\cdot \nabla \phi_{\lambda_1}=\lambda_1 \phi_{\lambda_1}$, and $F\cdot \nabla \phi_{\lambda_2}=\lambda_2 \phi_{\lambda_2}$.  Now we substitute $(\phi_{\lambda_1}(x))^{\alpha_1}(\phi_{\lambda_2}(x))^{\alpha_2})$ into the Koopman PDE,
\begin{eqnarray}
    F\cdot \nabla ((\phi_{\lambda_1}(x))^{\alpha_1}(\phi_{\lambda_2}(x))^{\alpha_2})) &=& 
    F\cdot(\alpha_1 (\phi_{\lambda_1})^{\alpha_1-1} \nabla \phi_{\lambda_1}\phi_{\lambda_2}^{\alpha_2}+\alpha_2(\phi_{\lambda_2})^{\alpha_2-1}\nabla \phi_{\lambda_2}\phi_{\lambda_1}^{\alpha_1}) \nonumber \\
    &=& F\cdot(\alpha_1 (\phi_{\lambda_1})^{\alpha_1-1} \phi_{\lambda_2}^{\alpha_2}\nabla \phi_{\lambda_1}+\alpha_2(\phi_{\lambda_2})^{\alpha_2-1}\phi_{\lambda_1}^{\alpha_1}\nabla \phi_{\lambda_2}) \nonumber  \\
    &=& \alpha_1 \phi_{\lambda_1}^{\alpha_1-1} \phi_{\lambda_2}^{\alpha_2}(F\cdot\nabla \phi_{\lambda_1})+\alpha_2\phi_{\lambda_2}^{\alpha_2-1}\phi_{\lambda_1}^{\alpha_1}(F\cdot\nabla \phi_{\lambda_2}) \nonumber \\
    &=& \alpha_1 \phi_{\lambda_1}^{\alpha_1-1} \phi_{\lambda_2}^{\alpha_2}(\lambda_1 \phi_{\lambda_1})+\alpha_2\phi_{\lambda_2}^{\alpha_2-1}\phi_{\lambda_1}^{\alpha_1}(\lambda_2\phi_{\lambda_2}) \nonumber \\
    &=&(\alpha_1 \lambda_1+\alpha_2 \lambda_2) \phi_{\lambda_1}^{\alpha_1}\phi_{\lambda_2}^{\alpha_2}.
\end{eqnarray}
Note also that,
\begin{equation}
    e^{\alpha_1 \lambda_1+\alpha_2 \lambda_2}=e^{\alpha_1 \lambda_1}e^{\alpha_2 \lambda_2}=(e^{\lambda_1})^\alpha_1 (e^{\lambda_2})^{\alpha_2})=(b_1)^{\alpha_1}(b_2)^{\alpha_2}.
\end{equation}
This relates the eigenvalues $\lambda_1, \lambda_2$ to the multiplying factors $b_1, b_2$.
\end{proof}
Note that since the theorem is stated, if $\phi_\lambda(x)$ exists in $U$, noting primarily that exponentiation by non-integers can potentially restrict domains.  For example, complex exponents force a consideration of an appropriate domain for a branch cut of the otherwise resulting Riemann surface.

{\color{black}
\section{Appendix: Illustration of Domain when stating Eigenfunctions}\label{fixapp}

The eigenfunctions derived by the PDE Eq.~(\ref{kooppde}) from Theorem \ref{thekoopmanpdetheorem} 
depend on the domain $U$ as emphasized in the theorem.  We illustrate this concept with a simple example, which appears in other context in many places including our own prior work, \cite{bollt2018matching} in the context of discussion of the domain of the KEIGS.  Consider the initial value problem in one dimension, 
\begin{equation}
\dot{x}=x^2, x(0)=x_0 \in {\mathbb R}/\{0\}.
\end{equation}
This is a classic example used to illustrate blow-up in finite time, since,
\begin{equation}
x(t)=\frac{1}{\frac{1}{x_0}-t},
\end{equation}
and $x(t)\rightarrow \infty$ if $x_0>0$ as $t\rightarrow \frac{1}{x_0}$, or $x(t)\rightarrow 0$ if $x_0<0$.
The Koopman eigenfunction equation Eq.~(\ref{kooppde}) is an ODE in this context due to the one-dimensional domain.
\begin{equation}\label{dme}
F(x) \cdot \nabla \phi_\lambda = x^2 \frac{d \phi_\lambda}{d x} = \lambda \phi_\lambda(x),
\end{equation}
which can be solved by integrating factors, 
\begin{equation}
\phi_\lambda(x)= e^{\frac{-\lambda}{x}}.
\end{equation}
This eigenfunction is not continuous on the full real line since there is an essential singularity at $x=0$.  However, it is smooth on any connected compact domain $U$, that is either $U\subset (-\infty,0)$ or $U\subset (0,\infty)$.  While we solved  Eq.~(\ref{dme}) directly by integrating factors, so not needing the method of characteristics, it nonetheless can be taken as descriptive of propagating initial data $h(x_0)$ from  $x_0\in \Lambda$,  in this setting that is just a singleton as this the domain is a line. Thus, $\phi_\lambda(x)$ can be smoothly defined for any such $U$.

}

\bibliographystyle{plain}
\bibliography{bibit}

\end{document}